\newtheorem{theo}{Theorem}
\newtheorem{prop}{Proposition}
\numberwithin{equation}{section}
\newcommand{\adj}{\operatorname{ADJ}}
\newcommand{\opp}{\operatorname{OPP}}
\newcommand{\fpp}{\operatorname{FPP}}
\newcommand{\M}{\operatorname{M}}
\newcommand{\Pf}{\operatorname{Pf}}
\newcommand{\al}{\alpha}
\newcommand{\be}{\beta}
\newcommand{\ce}{\gamma}
\newcommand{\de}{\delta}
\newcommand{\epf}{\hfill{$\square$}\medskip}
\mathchardef\pFcomma=\mathcode`, 
\newcommand*\pFq[5]{%
  \begingroup
  \begingroup\lccode`~=`,
    \lowercase{\endgroup\def~}{\pFcomma\mkern\pFqskip}%
  \mathcode`,=\string"8000
  {}_{#1}F_{#2}\biggl[\genfrac..{0pt}{}{#3}{#4};#5\biggr]%
  \endgroup
}
\begin{document}

\title{Lozenge tilings of hexagons with arbitrary dents}

\author{Mihai Ciucu and Ilse Fischer}

\thanks{The authors acknowledge support by the National Science Foundation, DMS grant 1101670 and Austrian Science Foundation FWF, START grant Y463}

\begin{abstract} Eisenk\"olbl gave a formula for the number of lozenge tilings of a hexagon on the triangular lattice with three unit triangles removed from along alternating sides. In earlier work, the first author extended this to the situation when an arbitrary set of unit triangles is removed from along alternating sides of the hexagon. In this paper we address the general case when an arbitrary set of unit triangles is removed from along the boundary of the hexagon.
\end{abstract}

\maketitle

\section{Introduction}


MacMahon's classical theorem \cite{MacM} on the enumeration of plane partitions that fit in an $a\times b\times c$ box is equivalent to the fact that the number of lozenge tilings\footnote{ A lozenge is the union of two adjacent unit triangles on the triangular lattice; a lozenge tiling of a lattice region $R$ is a covering of $R$ by lozenges that has no gaps or overlaps.} of a hexagon of side lengths $a$, $b$, $c$, $a$, $b$, $c$ (in cyclic order) on the triangular lattice is equal to
\begin{equation}
\label{MacM_eq}
\prod_{i=1}^a\prod_{j=1}^b\prod_{k=1}^c \frac{i+j+k-1}{i+j+k-2}.
\end{equation}
The elegance of this result has been the source of inspiration for a large amount of research in the last four decades. The questions about MacMahon's original four symmetry classes were augmented to Stanley's program \cite{StanPP} concerning a total of ten symmetry classes, all of which turn out to be enumerated by simple product formulas. Probabilistic aspects were studied by Cohn, Larsen and Propp \cite{CLP}, Borodin, Gorin and Rains \cite{Borodin}, and Bodini, Fusy and Pivoteau \cite{Bodini}. Extensions were given by the first author in \cite{ppone} and Vuleti\'c \cite{Vuletic}. 

Eisenk\"olbl \cite{Eisen} presented a refinement which gives an explicit formula for the number of lozenge tilings of a hexagon with a dent on each of three alternating sides. The first author extended this \cite{gk} to the situation when an arbitrary set of dents is placed on the union of three alternating sides. In this paper we address the general case when an arbitrary set of unit triangles is removed from along the boundary of the hexagon.


\begin{figure}[h]
\label{CLP_fig}
\centerline{
\hfill
{\includegraphics[width=0.45\textwidth]{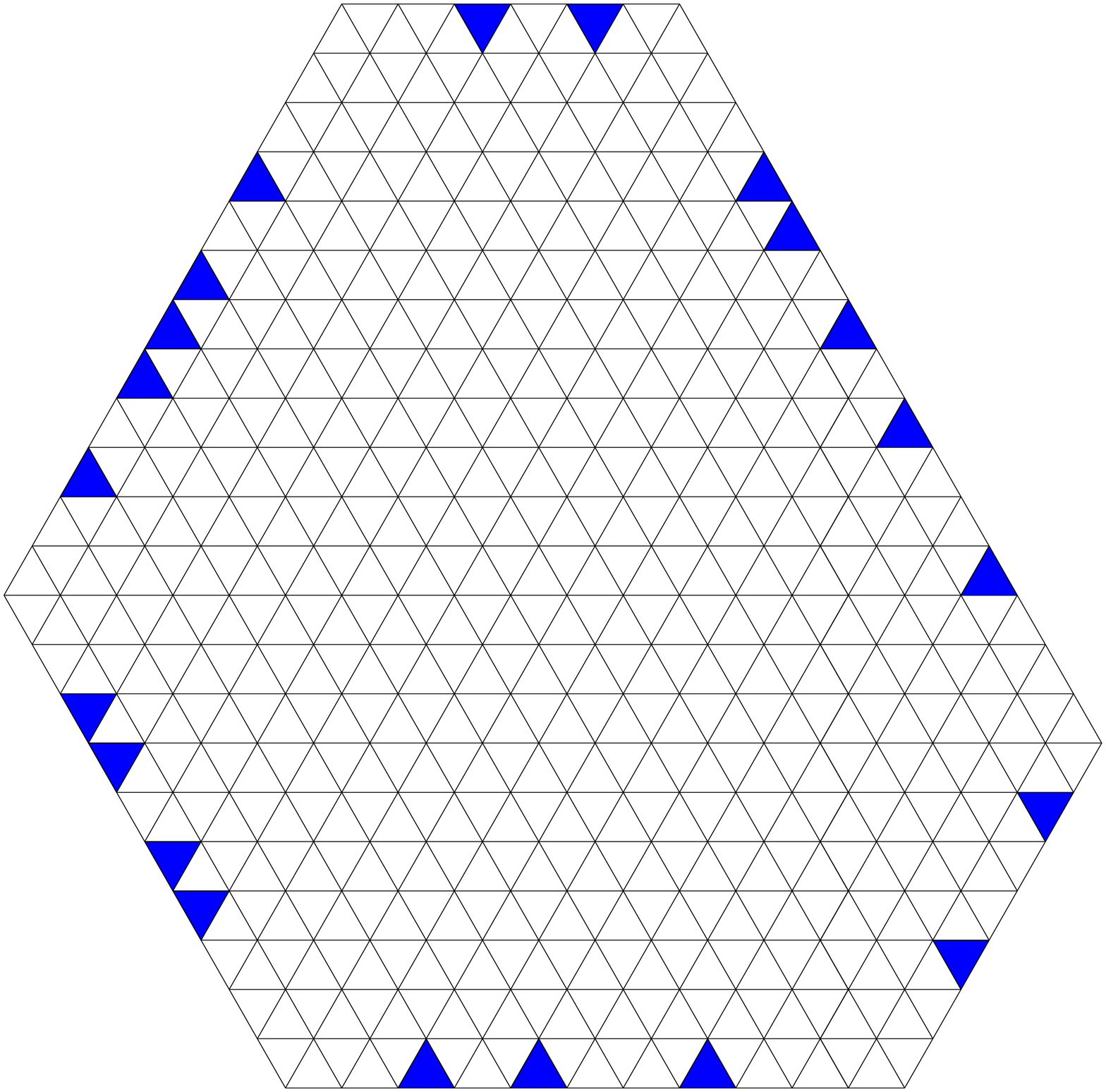}}
\hfill
{\includegraphics[width=0.45\textwidth]{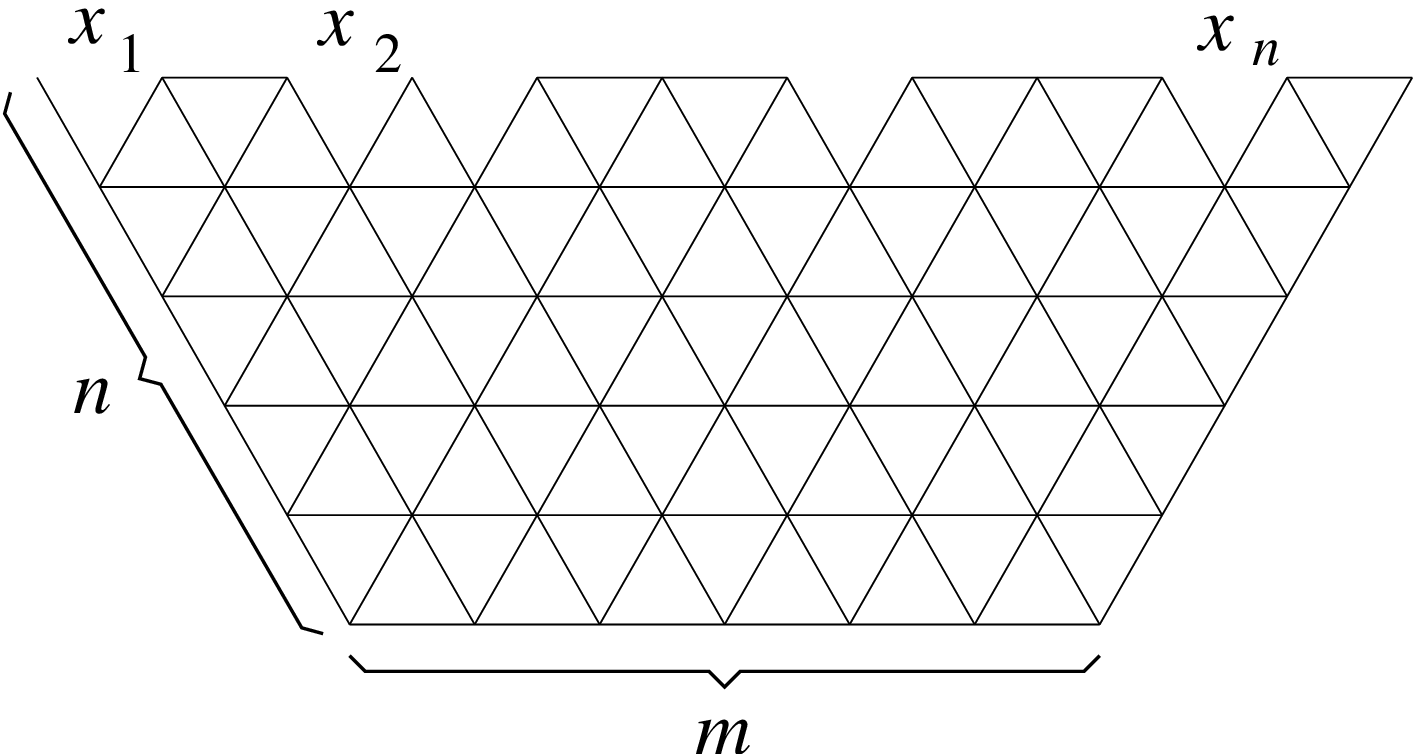}}
\hfill
}
\caption{\label{gk_regions_fig1} The region $H_{6,10,7}^{5}$ with 13 up-pointing and 8 down-pointing unit triangles removed from along the boundary (left) and the region $T_{6,5}(1,3,4,7,10)$ (right).}
\end{figure}

\section{Statement of main results}



As it is easy to check, any hexagon drawn on the triangular lattice has the property that its side-lengths, listed in cyclic order, are of the form $a,b+k,c,a+k,b,c+k$, for some non-negative integers $a$, $b$, $c$ and $k$. Our regions that extend Eisenk\"olbl's result are obtained by making dents in hexagons of such side-lengths, and concern therefore the most general hexagons one can draw on the triangular lattice.

Let $H_{a,b,c}^k$ be the hexagon on the triangular lattice whose sides have lengths $a,b+k,c,a+k,b,c+k$, in clockwise order starting at the top. It is readily checked that $H_{a,b,c}^k$ has $k$ more up-pointing unit triangles than down-pointing unit triangles. Therefore, in order to create a region that can be tiled by lozenges by removing unit triangles from along the boundary, we must remove $k$ more up-pointing ones than down-pointing ones. 

There are precisely $a+b+c+3k$ up-pointing unit lattice triangles in $H_{a,b,c}^k$ that share an edge with the boundary --- $a+k$, $b+k$, {resp.} $c+k$ along the southern, northeastern, resp. northwestern sides. Choose $n+k$ of them, and denote them by $\al_1,\dotsc,\al_{n+k}$ (we will sometimes refer to them as {\it dents of type $\al$}). Choose also $n$ unit triangles from the $a+b+c$ down-pointing ones that share an edge with the boundary, and denote them by $\be_1,\dotsc,\be_k$ (we call such dents {\it dents of type $\be$}). Our extension of the regions presented in \cite{gk} (which in turn generalize Eisenk\"olbl's regions studied in \cite{Eisen}) is the family of regions of type $H_{a,b,c}^k\setminus\{\al_1,\dotsc,\al_{n+k},\be_1,\dotsc,\be_k\}$ (see Figure~\ref{CLP_fig} for an example).


For convenience, we state below two results that are referenced in the statement of our main theorem. 


The first of them is Cohn, Larsen and Propp's \cite{CLP} translation to lozenge tilings of a classical result of Gelfand and Tsetlin \cite{GT}.

In view of the fact that lozenge tilings of a region can be identified with perfect matchings of its planar dual, for any region $R$ on the triangular lattice we denote by $\M(R)$ the number of lozenge tilings of $R$.

\begin{prop} \cite[Proposition 2.1]{CLP}
\label{CLP}
Let $T_{m,n}(x_1,\dotsc,x_n)$ be the region obtained from the trapezoid of side lengths $m$, $n$, $m+n$, $n$ (clockwise from bottom) by removing the down-pointing unit triangles from along its top that are in positions $x_1,x_2,\dotsc,x_n$ as counted from left to right (see Figure~\ref{CLP_fig} for an illustration). Then
\begin{equation}
\label{CLP_form}
\M(T_{m,n}(x_1,\dotsc,x_n))=\prod_{1\leq i<j\leq n}\frac{x_j-x_i}{j-i}.
\end{equation}
\end{prop}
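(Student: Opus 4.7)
The plan is to use the classical bijection between lozenge tilings and families of non-intersecting lattice paths, apply the Lindstr\"om--Gessel--Viennot (LGV) lemma to express $\M(T_{m,n}(x_1,\dotsc,x_n))$ as a determinant, and then evaluate the determinant by a standard identity.

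For the path encoding, I would distinguish one of the three lozenge orientations as the ``step'' tile. Each tiling of $T_{m,n}(x_1,\dotsc,x_n)$ then determines $n$ non-intersecting lattice paths: the $i$-th begins at the $i$-th unit edge along the left side of the trapezoid, and the $n$ sinks are the dents at positions $x_1,\dotsc,x_n$ on the top. Because the sources and sinks lie along the convex boundary in compatible orders, the only permutation $\sigma$ for which non-intersecting path families from source $i$ to sink $\sigma(i)$ can exist is $\sigma=\mathrm{id}$, so the LGV lemma gives
\begin{equation*}
\M(T_{m,n}(x_1,\dotsc,x_n)) = \det\bigl(p(i,j)\bigr)_{1 \le i, j \le n},
\end{equation*}
where $p(i,j)$ is the number of (unrestricted) lattice paths of the chosen type from source $i$ to sink $j$. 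A direct enumeration yields $p(i,j) = \binom{x_j-1}{i-1}$; it is worth noting that this depends only on $x_j$ and $i$ and not on $m$, consistent with the absence of $m$ in the right-hand side of \eqref{CLP_form}.

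The remaining step is to evaluate the determinant. Since $\binom{x_j-1}{i-1}$ is a polynomial in $x_j$ of degree $i-1$ with leading coefficient $1/(i-1)!$, suitable row operations reduce the matrix to Vandermonde form, giving
\begin{equation*}
\det\left(\binom{x_j-1}{i-1}\right)_{i,j=1}^{n} = \frac{\prod_{1\le i<j\le n}(x_j-x_i)}{\prod_{k=0}^{n-1}k!} = \prod_{1\le i<j\le n}\frac{x_j-x_i}{j-i},
\end{equation*}
where the last equality uses $\prod_{k=0}^{n-1}k! = \prod_{1\le i<j\le n}(j-i)$. This is precisely \eqref{CLP_form}.

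The main obstacle is the lattice-path enumeration needed to pin down the formula $p(i,j) = \binom{x_j-1}{i-1}$, which requires a careful choice of coordinates and a clean verification that this count is independent of $m$. Once this identification is in hand, the determinant evaluation is entirely classical, being essentially the form of Weyl's dimension formula for the irreducible polynomial $GL_n$-module whose highest weight is determined by the $x_j$'s.
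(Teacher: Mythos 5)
Your proposal is mathematically sound, but note that the paper does not actually prove this statement: Proposition~\ref{CLP} is quoted verbatim from Cohn, Larsen and Propp \cite{CLP}, where it is obtained as the lozenge-tiling translation of the classical Gelfand--Tsetlin count of triangular interlacing arrays (equivalently, semistandard tableaux counted by Weyl's dimension formula). So there is no in-paper argument to compare against, and what you have written is a legitimate independent proof along the most standard alternative route. Your outline is correct in all essentials: the paths built from the two lozenge orientations sharing an edge parallel to the left side connect the $n$ unit edges of that side to the $n$ dents, the convexity of the boundary forces the identity permutation in the Lindstr\"om--Gessel--Viennot determinant, and $\det\bigl(\binom{x_j-1}{i-1}\bigr)$ does reduce to $\prod_{i<j}(x_j-x_i)\big/\prod_{k=0}^{n-1}k!=\prod_{i<j}\frac{x_j-x_i}{j-i}$ by the row reduction you describe. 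The one detail you should pin down when writing this up is the indexing convention in $p(i,j)=\binom{x_j-1}{i-1}$: the sources must be counted from the end of the left side adjacent to the top (so that source $1$, whose path runs along the top row, reaches any dent in exactly $\binom{x_j-1}{0}=1$ way, and source $i$ needs exactly $i-1$ steps in the second direction); counting from the other end would give $\binom{x_j-1}{n-i}$ and merely permute the rows. The two approaches are really the same computation in different clothing --- your path families are the Gelfand--Tsetlin patterns of \cite{CLP} read off level by level --- but the LGV route has the advantage of making the independence of the answer from $m$ transparent, since $m$ never enters the individual path counts.
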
  

The second is a result we quote from \cite{gk} (in the notation of Theorem~\ref{main_thm} below, this result involves two related families of regions that occur when an $\al_i$ and a $\ce_j$ are removed from a certain augmented version of the region $H_{a,b,c}^k$ --- namely, the region $\bar{H}_{a,b,c}^k$ described in the statement of Theorem \ref{main_thm}).

\begin{figure}[h]
\centerline{
\hfill
{\includegraphics[width=0.45\textwidth]{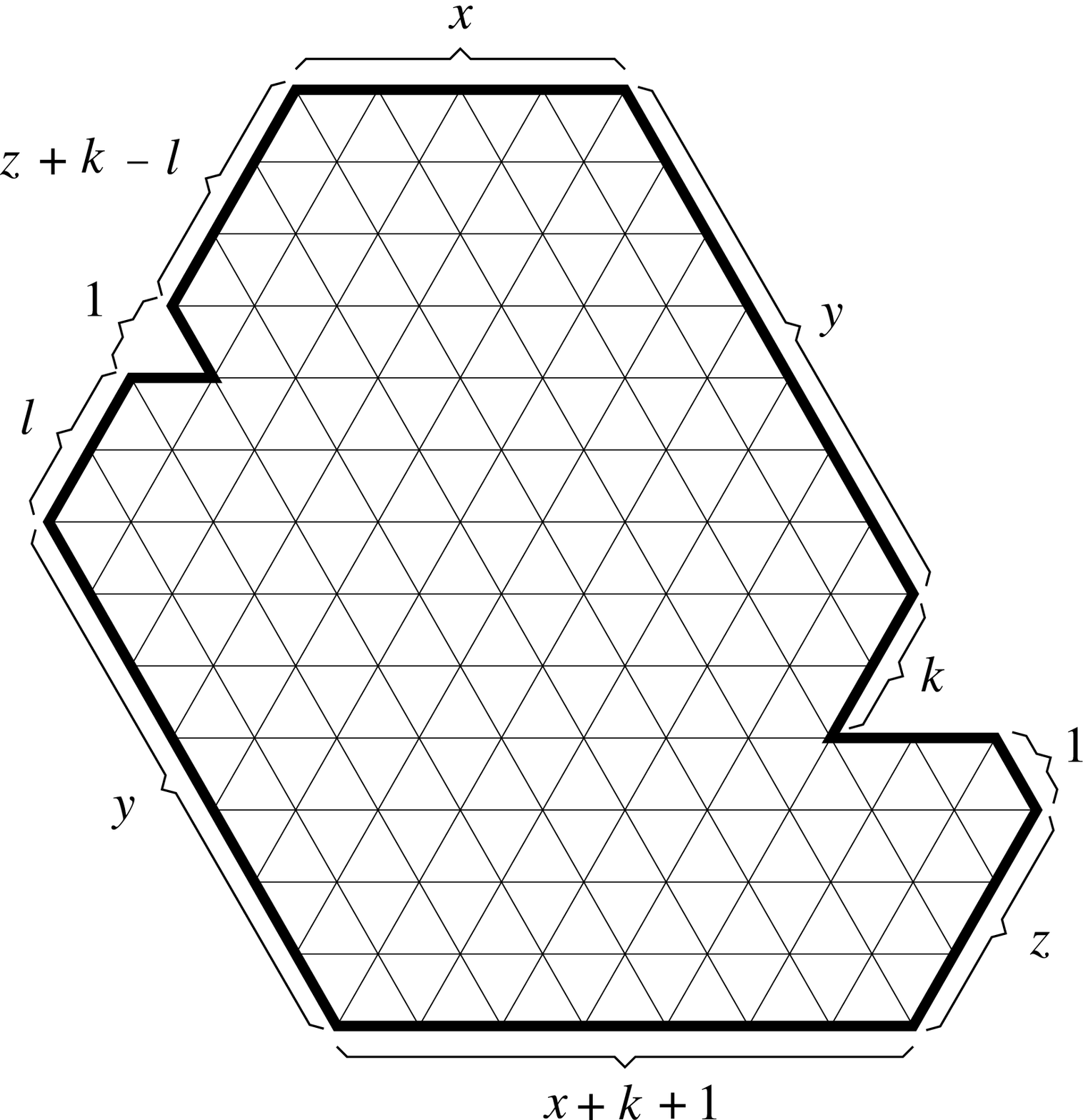}}
\hfill
{\includegraphics[width=0.45\textwidth]{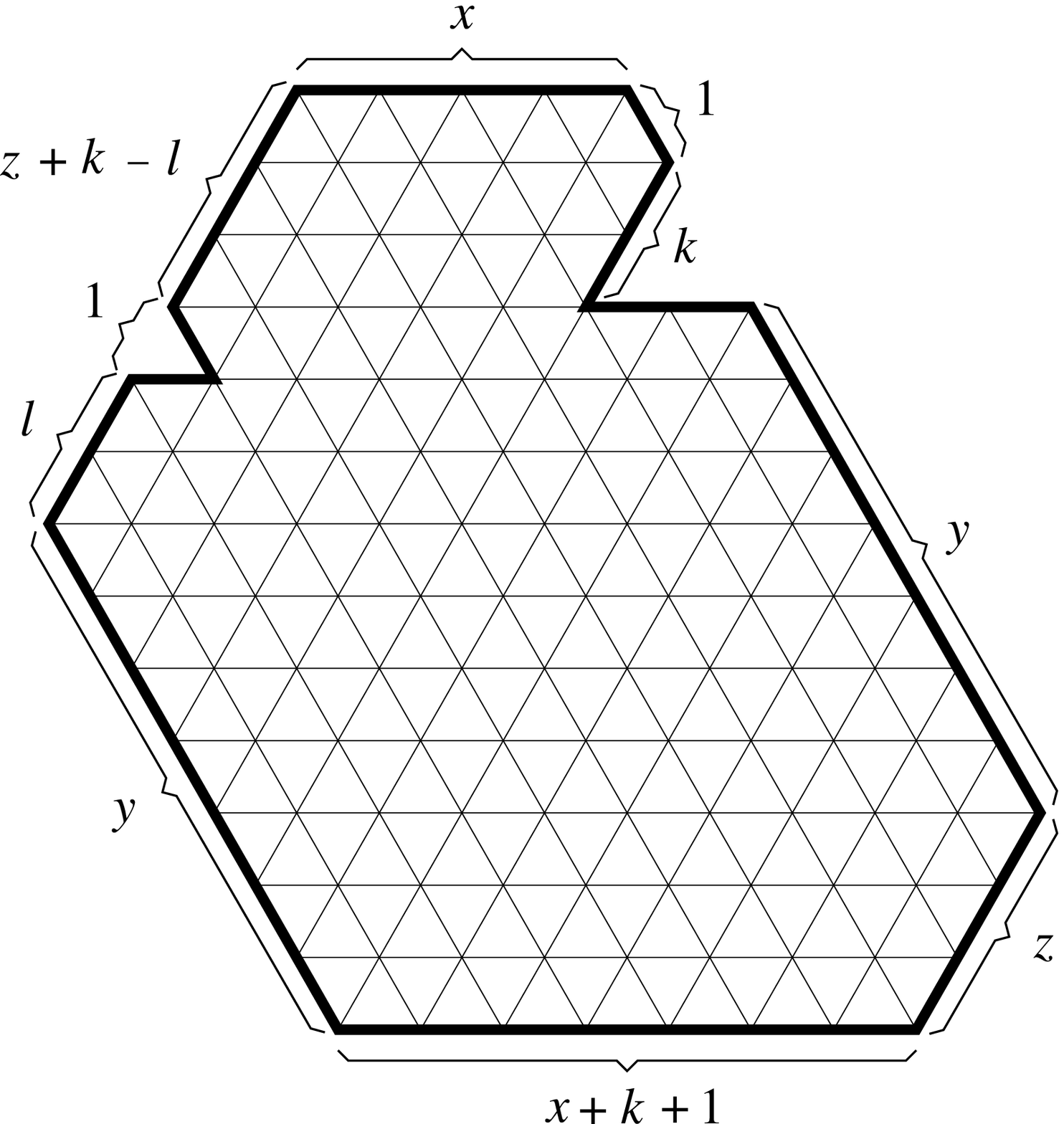}}
\hfill
}
\caption{\label{gk_regions_fig2} The hexagons with two notches $H_{4,7,3}(2,2)$ (left) and $H'_{4,7,3}(2,2)$ (right).}
\end{figure}

\begin{prop} \cite[Proposition 4.2]{gk}
\label{gk_regions_prop}
$(${\rm a}$)$. Let $H_{a,b,c}(k,l)$ be the region obtained from the hexagon of side lengths $a$, $b+k+1$, $c$, $a+k+1$, $b$, $c+k+1$ $($clockwise from top$)$ by removing an up-pointing unit triangle from its northwestern side, $l$ units above the western corner, and an up-pointing triangle of side $k$ from its northeastern side, one unit above the eastern corner $($see the picture on the left in Figure~\ref{gk_regions_fig2} for an illustration$)$.

Let $m=\min(a,b)$ and $M=\max(a,b)$. Then we have
\begin{equation}
\label{gk_form1a}
\M(H_{a,b,c}(k,l))=\M(H_{a,b,c})\frac{p(c,l)}{p(0,0)},
\end{equation}
where $\M(H_{a,b,c})$ is given by $(\ref{MacM_eq})$, and the polynomial $p(c,l)$ is defined to be
\begin{multline}
\label{gk_form1b}
p(c,l):=(l+1)_b (c+k-l+1)_a 
\\
\times
(c+k+2)(c+k+3)^2\cdots(c+k+m+1)^m (c+k+m+2)^m\cdots(c+k+M+1)^m 
\\
\times
(c+k+M+2)^{m-1} (c+k+M+3)^{m-2}\cdots (c+k+M+m)
\\
\times 
\sum_{i=1}^{k+1} \frac{(-1)^{i-1}}{(i-1)!(k-i+1)!} (l-k+i)_{k-i+1} (l+b+1)_{i-1} (c+1)_{i-1} (c+i+1)_{k-i+1}.
\end{multline}

$(${\rm b}$)$. Let $H'_{a,b,c}(k,l)$ be the region defined precisely as $H_{a,b,c}(k,l)$, with the one exception that the up-pointing triangle of side $k$ is one unit below the northeastern corner, rather than one unit above the eastern corner $($see the picture on the right in Figure~\ref{gk_regions_fig2} for an illustration$)$.

Let $\nu=\min(b-1,k)$, and define $r(c)$ by
\begin{equation}
\label{dz_form}
r(c):= \begin{cases} 
(c+2)^1\cdots(c+\nu+1)^{\nu}\cdots(c+b+k-\nu)^{\nu}\cdots(c+b+k-1)^1,  &\nu\geq1\\
\ \ \ \ \,1,  &\nu=0\\
\dfrac{1}{(c+1)_k},  &\nu=-1
\end{cases}
\end{equation}
%
%
$($in the first branch the bases are incremented by 1 from each factor to the next; the exponents are incremented by one until they reach $\nu$, stay equal to $\nu$ across the middle portion, and then they decrease by one unit from each factor to the next$)$.

Then we have
\begin{equation}
\label{gk_form2a}
\M(H'_{a,b,c}(k,l))={a+k\choose k}\frac{q(c,l)}{q(0,0)},
\end{equation}
where the polynomial $q(c,l)$ is defined to be
\begin{multline}
\label{gk_form2b}
q(c,l):=r(c)\,(l+1)_b (z+k-l+1)_a 
\\
\times
(c+k+2)(c+k+3)^2\cdots(c+k+m+1)^m (c+k+m+2)^m\cdots(c+k+M+1)^m 
\\
\times
(c+k+M+2)^{m-1} (c+k+M+3)^{m-2}\cdots (c+k+M+m)
\\
\times 
\sum_{i=1}^{k+1} \frac{(-1)^{i-1}}{(i-1)!(k-i+1)!} (l-k+i)_{k-i+1} (l+b+1)_{i-1} (l-k-c)_{i-1} (l-k-c+i)_{k-i+1}
\end{multline}
$($as in part \text{\rm (a)}, $m=\min(a,b)$ and $M=\max(a,b)$$)$.

\end{prop}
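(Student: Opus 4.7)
My proposed approach is via non-intersecting lattice paths and a decomposition of the side-$k$ triangular hole into a signed sum over simpler configurations. Under the standard bijection, lozenge tilings of $H_{a,b,c}(k,l)$ correspond to families of non-crossing paths whose sources and sinks are dictated by the positions of the removed triangles, and by the Lindstr\"om--Gessel--Viennot lemma $\M(H_{a,b,c}(k,l))$ equals a determinant whose entries are binomial coefficients. The unit-triangle dent at height $l$ on the northwestern side perturbs a single row of this matrix in a simple way, whereas the side-$k$ triangle near the eastern corner affects $k$ consecutive rows.

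The central step is to decompose the contribution of the side-$k$ triangle by performing a Laplace expansion of the determinant along those $k$ rows (equivalently, by expanding over where a given tiling crosses a specified interior edge of the removed triangle). Each term in the expansion is indexed by $i\in\{1,\ldots,k+1\}$ and contributes the alternating sign $(-1)^{i-1}$ together with a denominator $(i-1)!(k-i+1)!$ coming from the Vandermonde-type evaluation of the $k\times k$ minor (these factorials can equivalently be viewed as arising from the tiling counts of the two forced triangular sub-regions of sides $i-1$ and $k-i+1$ adjacent to the crossing point). The remaining complementary minor counts the tilings of an auxiliary hexagon carrying at most two single-unit dents, and is evaluated by specializing Proposition \ref{CLP}, which produces the four $i$-dependent Pochhammer factors $(l-k+i)_{k-i+1}$, $(l+b+1)_{i-1}$, $(c+1)_{i-1}$, $(c+i+1)_{k-i+1}$ in the summand of (\ref{gk_form1b}).

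The $i$-independent prefactor $(l+1)_b(c+k-l+1)_a$ and the long MacMahon-type product in $p(c,l)$ are extracted from the unperturbed portion of the determinant; these combine with $\M(H_{a,b,c})$ and the normalization $p(0,0)$ to give (\ref{gk_form1a}). Part (b) follows the same template, the only difference being that the side-$k$ triangle is now adjacent to the northeastern corner, which forces a new block of lozenges just outside the triangle. The prefactor $r(c)$ counts the tilings of this forced block; its three branches in (\ref{dz_form}) correspond to the cases in which the block is a large staircase-shaped rhombus ($b-1\geq k$), degenerates to a trivial region ($b=1$), or overlaps with the corner structure and thus requires the reciprocal correction $1/(c+1)_k$ ($b=0$).

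The main technical obstacle is the term-by-term matching in the Laplace expansion: producing exactly the four Pochhammer factors stated above from the complementary minors requires careful accounting of the shifts by $l$, $c$, $k$, and $i$ that enter each minor, and of the bookkeeping needed to separate the $i$-dependent from the $i$-independent contributions. A secondary difficulty in part (b) is the case analysis for $r(c)$, which requires delicate geometric tracking of how the forced environment of the side-$k$ triangle degenerates as $b$ becomes small relative to $k$.
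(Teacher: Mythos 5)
This proposition is not proved in the paper at all: it is quoted verbatim from \cite[Proposition 4.2]{gk} (the first author's earlier paper on generalized Kuo condensation), so there is no in-paper argument to compare yours against. Judged on its own terms, your outline has several concrete gaps. A Laplace expansion of the Lindstr\"om--Gessel--Viennot determinant along the $k$ rows associated with the side-$k$ triangle produces a signed sum over \emph{all} $k$-element subsets of the columns, which is vastly more than the $k+1$ terms appearing in (\ref{gk_form1b}); you give no argument for why all but $k+1$ of the complementary minors vanish, or how the surviving terms collapse to the stated single sum. Moreover, your parenthetical claim that this expansion is ``equivalently'' an expansion over where a tiling crosses an interior edge of the removed triangle cannot be right as stated: a decomposition of the set of tilings by a crossing statistic yields a sum with all terms nonnegative, whereas the summand in (\ref{gk_form1b}) carries the alternating sign $(-1)^{i-1}$, so the two mechanisms are genuinely different and only the signed one could produce the stated formula.

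Two further points. The evaluation of the complementary minors as the four Pochhammer factors $(l-k+i)_{k-i+1}$, $(l+b+1)_{i-1}$, $(c+1)_{i-1}$, $(c+i+1)_{k-i+1}$ is asserted rather than derived; Proposition~\ref{CLP} applies to trapezoids with prescribed dents along the top, and you do not show that the surviving minors are of that special form (for generic column subsets they are not). And in part (b) your reading of $r(c)$ as the number of tilings of a forced block is inconsistent with the branch $r(c)=1/(c+1)_k$ for $\nu=-1$, which is a reciprocal and hence not a tiling count; the three branches must arise from a normalization or cancellation, not from a direct geometric count, and your case analysis does not account for this. The LGV-plus-expansion strategy is not unreasonable as a starting point, but as written the key identities are named rather than established, so the proposal does not yet constitute a proof.
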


%
%

We are now ready to state the three main results of this paper. The first one concerns the case when the dents are confined to five of the six sides of the hexagon, and provides a Pfaffian expression for the number of tilings, with each entry in the Pfaffian being given explicitly either by a simple product of linear factors, or by a single sum of products of linear factors. The second covers the general case (dents are allowed to be anywhere along the six sides of the hexagon), and provides a nested Pfaffian expression, in which 
the entries are in their turn Pfaffians, namely of the type described above.

Define $\bar{H}_{a,b,c}^k$ to be the region obtained from $H_{a,b,c}^k$ by augmenting it with one string of~$k$ contiguous down-pointing unit triangles along its bottom as shown on the left in Figure~\ref{fbb}. Denote the $k$ down-pointing unit triangles in this string by $\ce_1,\dotsc,\ce_k$.

For a skew-symmetric matrix $A=(a_{ij})_{1\leq i,j\leq 2k}$, it will be convenient to denote its Pfaffian by $\Pf[(a_{ij})_{1\leq i<j\leq 2k}]$.

\begin{theo} 
\label{main_thm}
Assume that one of the three sides on which dents of type $\be$ can occur does not actually have any dents on it. Without loss of generality, suppose this is the southwestern side.
Let $\de_1,\dotsc,\de_{2n+2k}$ be the elements of the set $\{\al_1,\dotsc,\al_{n+k}\}\cup\{\be_1,\dotsc,\be_n\}\cup\{\ce_1,\dotsc,\ce_k\}$ listed in a cyclic order.\footnote{ If ties occur --- i.e., two of these unit triangles are encountered at the same time as one moves around the boundary of the hexagon --- they can be broken arbitrarily, and we call cyclic any of the resulting orders. If $\al_1$ (resp., $\ce_1$) is the leftmost $\al_i$ (resp., $\ce_i$) along the bottom side in the picture on the right in Figure~\ref{fbb}, $\be_1$ is the bottommost $\be_i$ along the southeastern side, and $\al_1,\dotsc,\al_{13}$ (resp., $\be_1,\dotsc,\be_8$, and $\ce_1,\dotsc,\ce_5$) occur in counterclockwise order, then one such cyclic order of the union of the $\al_i$'s, $\be_i$'s  and $\ce_i$'s is for instance $\ce_1,\ce_2,\al_1,\ce_3,\ce_4,\al_2,\ce_5,\al_3,\be_1,\be_2,\be_3,\be_4,\al_4,\al_5,\al_6,\al_7,\al_8,\be_5,\be_6,\be_7,\be_8,\al_9,\al_{10},\al_{11},\al_{12},\al_{13}$.}

\begin{figure}[h]
\centerline{
\hfill
{\includegraphics[width=0.45\textwidth]{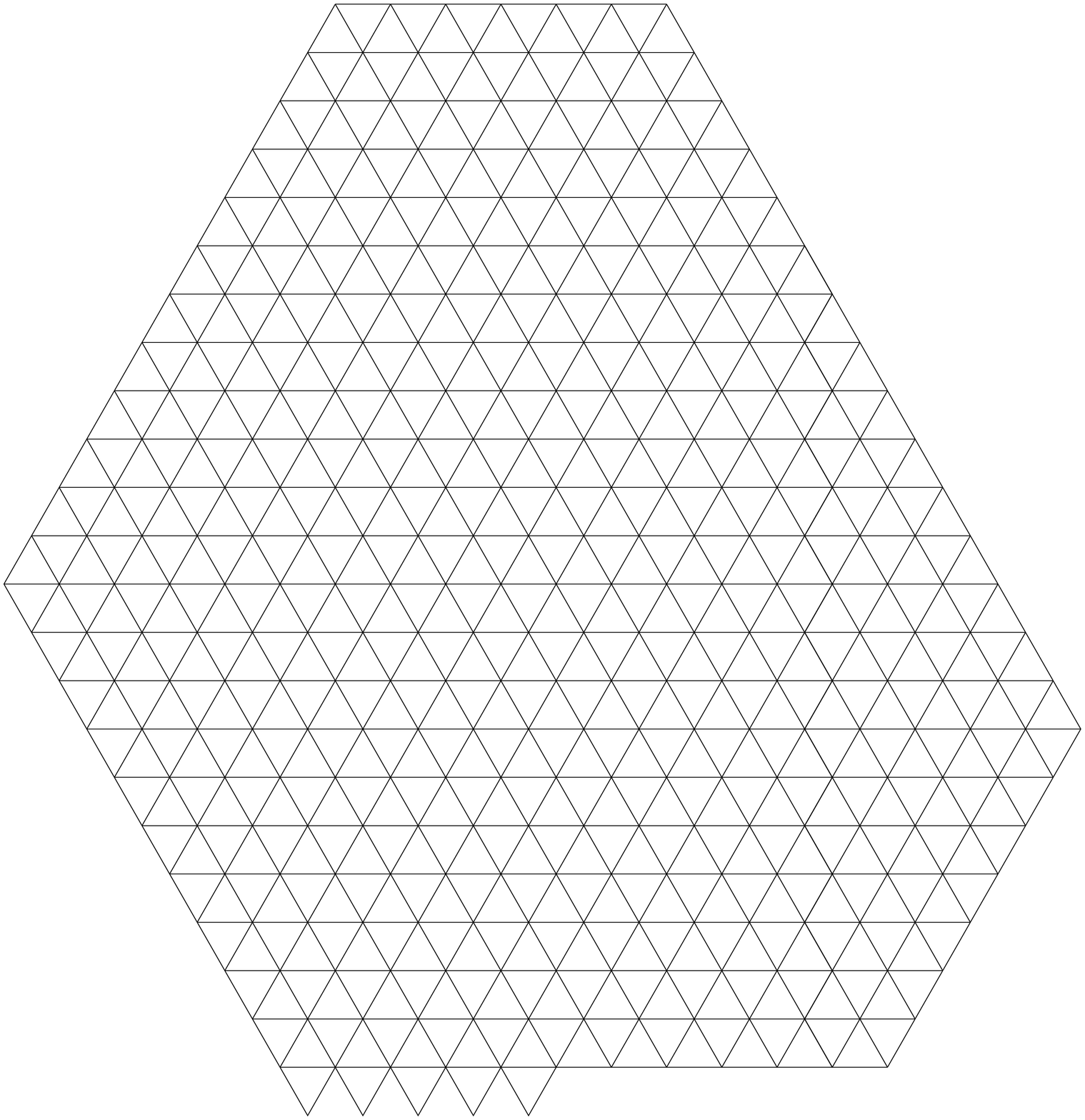}}
\hfill
{\includegraphics[width=0.45\textwidth]{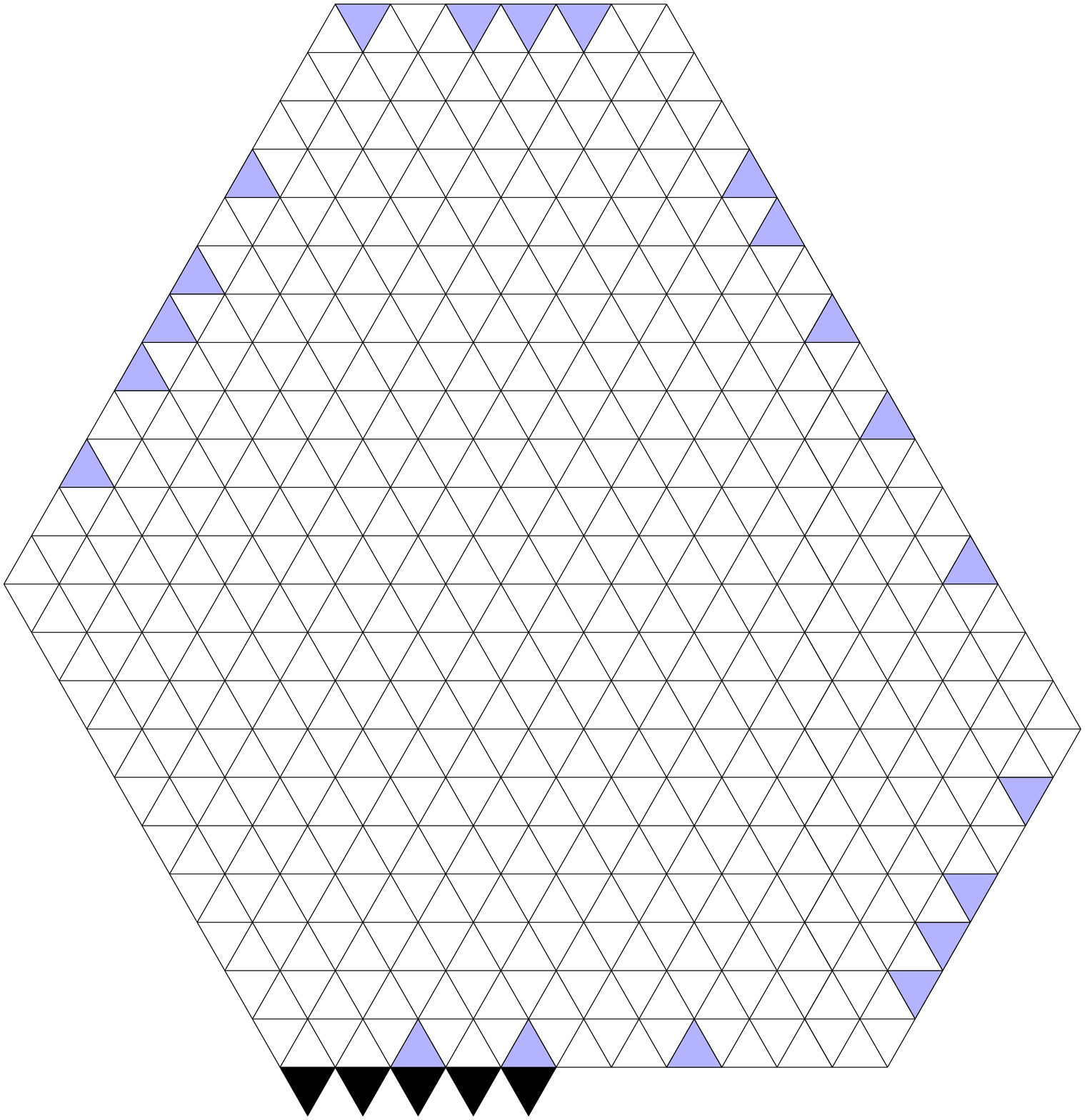}}
\hfill
}
\caption{\label{fbb} The region $\bar{H}_{6,10,7}^5$ and a choice of unit triangles along five of its sides.}
\end{figure}

Then we have
\begin{equation}
\label{pfaff_expr}
\M(H_{a,b,c}^k\setminus\{\al_1,\dotsc,\al_{n+k},\be_1,\dotsc,\be_n\}) 
=
\frac{1}{\left[\M(\bar{H}_{a,b,c}^k)\right]^{n+k-1}}
\Pf\left[\left(
\M(\bar{H}_{a,b,c}^k\setminus\{\de_i,\de_j\})
\right)_{1\leq i<j\leq 2n+2k}\right],
\end{equation}
where all the quantities on the right hand side are given by explicit formulas: 

$(i)$ $\M(\bar{H}_{a,b,c}^k)$ by equation~(\ref{MacM_eq}),

$(ii)$ $\M(\bar{H}_{a,b,c}^k\setminus\{\al_i,\be_j\})$ is 0 if $\al_i$ shares an edge with one of the $\ce_\nu$'s, or if $\al_i$ is on the northwestern side, at distance\footnote{ The distance from a dent to a corner is meant in the ``infimum'' sense; e.g., on the right in Figure~\ref{fbb}, the distance between the bottommost dent on the northwestern side and the western corner is 2.} at most $k-1$ from the western corner;
otherwise, it is given by Proposition~\ref{adjacent_prop} if $\al_i$ and $\be_j$ are along adjacent sides, and by Proposition~\ref{opposite_prop} if $\al_i$ and $\be_j$ are along opposite sides,

$(iii)$ $\M(\bar{H}_{a,b,c}^k\setminus\{\al_i,\ce_j\})$ is 0 if $\al_i$ shares an edge with one of the $\ce_\nu$'s with $\nu\neq j$, or if $\al_i$ is on the northwestern side, at distance at most $j-2$ from the western corner;
otherwise is given by Proposition~\ref{CLP} if $\al_i$ and $\ce_j$ are along the same side, and by Proposition~\ref{gk_regions_prop} if $\al_i$ and $\ce_j$ are along different sides,

$(iv)$ $\M(\bar{H}_{a,b,c}^k\setminus\{\al_i,\al_j\})=\M(\bar{H}_{a,b,c}^k\setminus\{\be_i,\be_j\})=\M(\bar{H}_{a,b,c}^k\setminus\{\be_i,\ce_j\})=\M(\bar{H}_{a,b,c}^k\setminus\{\ce_i,\ce_j\})=0$.

\end{theo}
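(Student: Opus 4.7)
My plan is to prove the identity in two stages: first, a geometric reduction that replaces the hexagon $H_{a,b,c}^k$ by the augmented region $\bar{H}_{a,b,c}^k$ (into which the extra dents $\ce_1,\dotsc,\ce_k$ have been built, so that every Pfaffian entry refers to the same ambient region), and second, the application of a general Pfaffian identity for matching numbers of a planar region with boundary dents.

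\textbf{Step 1: The augmentation reduction.} I would first show
\begin{equation*}
\M(H_{a,b,c}^k\setminus\{\al_1,\dotsc,\al_{n+k},\be_1,\dotsc,\be_n\})=\M(\bar{H}_{a,b,c}^k\setminus\{\al_1,\dotsc,\al_{n+k},\be_1,\dotsc,\be_n,\ce_1,\dotsc,\ce_k\}).
\end{equation*}
This is a ``forced lozenges'' argument: once all $k$ down-pointing triangles $\ce_1,\dotsc,\ce_k$ are removed from the augmented bottom strip, only up-pointing unit triangles remain in that strip, each of which can only be covered by a lozenge extending upward into $H_{a,b,c}^k$, and a direct inspection of the strip shows that these lozenges are uniquely forced and produce a bijection onto the tilings on the left-hand side.

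\textbf{Step 2: Pfaffian identity.} Writing $S=\{\de_1,\dotsc,\de_{2n+2k}\}$ and, for brevity, $\bar H=\bar{H}_{a,b,c}^k$, the claim reduces to
\begin{equation*}
\M(\bar H\setminus S)\cdot[\M(\bar H)]^{n+k-1}=\Pf\bigl[\M(\bar H\setminus\{\de_i,\de_j\})\bigr]_{1\leq i<j\leq 2n+2k},
\end{equation*}
an identity for a planar region whose removed vertices all lie on the outer boundary in a specified cyclic order. I plan to derive this from Kasteleyn theory: let $K$ be a Kasteleyn-signed skew-symmetric adjacency matrix of the planar dual of $\bar H$, so that $\M(\bar H)=|\Pf(K)|$ and $\M(\bar H\setminus T)=|\Pf(K[T^c])|$ for every even subset $T$ of vertices. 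Jacobi's identity for Pfaffians of complementary minors yields $\Pf(K^{-1}[S])=\pm\Pf(K[S^c])/\Pf(K)$; in the special case $|S|=2$ this gives $K^{-1}_{ij}=\pm\M(\bar H\setminus\{\de_i,\de_j\})/\M(\bar H)$. Expanding $\Pf(K^{-1}[S])$ as a sum over perfect pairings of $S$ and substituting these two-point expressions produces the advertised identity, up to a global sign that is pinned down by the positivity of both sides.

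\textbf{Step 3: Identification of entries, and the main obstacle.} The case analysis (i)--(iv) then follows fairly directly: parity (equal numbers of up- and down-pointing triangles must be removed) forces the four families of zero entries in (iv), while the additional zero sub-cases in (ii) and (iii) come from local forcing (an $\al_i$ adjacent to a down-triangle $\ce_\nu\notin S$ forces a lozenge that either leaves an impossible gap or conflicts with another removed triangle). In each surviving case, a short cut-and-paste reduces a two-dent hexagon to one of the canonical forms enumerated in Proposition~\ref{CLP} or Proposition~\ref{gk_regions_prop}, or to the adjacent/opposite two-dent propositions referenced in the statement. The main obstacle is the sign control in Step~2: a priori $\Pf[K^{-1}_{ij}]_{i,j\in S}$ is only a \emph{signed} combination of the matching numbers $\M(\bar H\setminus\{\de_i,\de_j\})$, and for it to collapse to the Pfaffian of the \emph{unsigned} counts one must show that the Kasteleyn signs of the $K^{-1}_{ij}$'s factor as $\epsilon_i\epsilon_j$ for suitable $\epsilon_i\in\{\pm1\}$. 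Such a factorization rests on the planarity of the dual graph together with the fact that all of the $\de_i$'s lie on the outer face in the stated cyclic order---which is precisely why the hypothesis that one of the $\be$-sides carries no dents is needed. An alternative route that partly sidesteps this sign bookkeeping is to induct on $n+k$ via a Pfaffian extension of Kuo's graphical condensation lemma, applied four consecutive dents at a time and reducing the identity to its known four-dent case.
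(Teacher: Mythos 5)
Your overall architecture matches the paper's: identify $\M(H_{a,b,c}^k\setminus\{\al_1,\dotsc,\al_{n+k},\be_1,\dotsc,\be_n\})$ with $\M(\bar{H}_{a,b,c}^k\setminus\{\de_1,\dotsc,\de_{2n+2k}\})$ (your Step 1 is essentially immediate, since $\bar{H}_{a,b,c}^k\setminus\{\ce_1,\dotsc,\ce_k\}$ is exactly $H_{a,b,c}^k$), apply a boundary Pfaffian identity to the planar dual of $\bar{H}_{a,b,c}^k$ with the vertices $\de_1,\dotsc,\de_{2n+2k}$, and then verify the entries case by case. The one genuine gap is in your Step 2. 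The identity you need there is precisely Theorem~\ref{gk_thm} (that is, \cite[Theorem 2.1]{gk}), which the paper simply invokes; you instead attempt to re-derive it from Kasteleyn theory and Jacobi's identity for complementary Pfaffian minors, and you yourself flag that the argument is incomplete: one must show that the Kasteleyn signs attached to the two-point quantities $K^{-1}_{ij}$ factor as $\epsilon_i\epsilon_j$ over the boundary vertices taken in cyclic order, so that the signed Pfaffian collapses to the Pfaffian of the unsigned matching counts. That factorization is true, but it is the entire content of the lemma, and asserting that it ``rests on planarity'' is not a proof. Either carry out that sign analysis in detail or, more efficiently, cite Theorem~\ref{gk_thm}, after which Step 2 becomes a one-line application.

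A secondary inaccuracy: the hypothesis that the southwestern side carries no $\be$-dents is not what makes the sign bookkeeping (equivalently, the cyclic-order condition of Theorem~\ref{gk_thm}) work --- that condition holds for any placement of the dents on the boundary of $\bar{H}_{a,b,c}^k$. The hypothesis is needed for the explicit evaluation of the Pfaffian entries: the string $\ce_1,\dotsc,\ce_k$ forces $k$ strips of lozenges along the southwestern side of $\bar{H}_{a,b,c}^k$, and only when that side is free of dents does removing these forced strips leave a centrally symmetric hexagon with exactly two boundary dents, i.e., a region of one of the types covered by Propositions~\ref{CLP}, \ref{gk_regions_prop}, \ref{adjacent_prop} and \ref{opposite_prop}. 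The same forcing is the source of the extra vanishing cases in (ii) and (iii) (an $\al_i$ adjacent to some $\ce_\nu$ leaves $\ce_\nu$ uncoverable; an $\al_i$ on the northwestern side too close to the western corner collides with the forced strips), which your Step 3 describes correctly in outline.
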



\begin{theo} 
\label{main_thm_gen}
Let $\al_1,\dotsc,\al_{n++k}$ be arbitrary dents of type $\al$ and $\be_1,\dotsc,\be_n$ arbitrary dents of type $\be$ along the boundary of ${H}_{a,b,c}^k$. Then $\M({H}_{a,b,c}^k\setminus\{\al_1,\dotsc,\al_{n+k},\be_1,\dotsc,\be_n\})$ is equal to the Pfaffian of a $2n\times2n$ matrix whose entries are Pfaffians of $(2k+2)\times(2k+2)$ matrices of the type in the statement of Theorem~\ref{main_thm}.

\end{theo}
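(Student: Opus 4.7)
The plan is to reduce the fully general six-sided situation of Theorem~\ref{main_thm_gen} to the five-sided situation of Theorem~\ref{main_thm} by a planar decomposition and a Pfaffian-summation identity. The obstruction is that when $\be$-dents are present on all three of their admissible sides, the cyclic arrangement of dents around the boundary no longer matches the alternation needed for a single Pfaffian as in Theorem~\ref{main_thm}; the task is therefore to express $\M$ as a Pfaffian whose entries are themselves Pfaffians of the five-sided type.

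First, I would translate the lozenge-tiling problem into one about non-intersecting lattice paths in the usual way. Working in $\bar H_{a,b,c}^k$, the $n+k$ up-pointing dents serve as sources and the $n+k$ down-pointing dents (the $n$ dents of type $\be$ together with the $k$ dents of type $\ce$) serve as sinks. In this language, Theorem~\ref{main_thm} is exactly the case in which sources and sinks alternate in a cyclically compatible manner, so a Stembridge-style planar Pfaffian of size $2n+2k$ is available. The configurations not covered by Theorem~\ref{main_thm} are precisely those with $\be$-dents on the ``excluded'' third side, which is exactly what breaks the cyclic alternation.

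Next, I would separate the offending boundary by choosing a lattice cut that partitions $\bar H_{a,b,c}^k$ into two sub-regions $R_{\mathrm{low}}$ and $R_{\mathrm{up}}$ so that each half carries $\be$-dents along at most two of the three $\be$-admissible sides. Restricting a tiling to each sub-region and summing over the configurations $C$ of crossings of the cut by horizontal lozenges yields
\[
\M(H_{a,b,c}^k \setminus \{\al_1,\dotsc,\al_{n+k},\be_1,\dotsc,\be_n\}) = \sum_{C} \M(R_{\mathrm{low}}^C)\,\M(R_{\mathrm{up}}^C),
\]
where each $R_{\mathrm{low}}^C$ and $R_{\mathrm{up}}^C$ is a five-sided configuration to which Theorem~\ref{main_thm} applies, so each factor is itself a $(2k+2)$-Pfaffian of the prescribed type.

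Finally, I would collapse the sum over $C$ into a single outer Pfaffian of size $2n$ via a Pfaffian analog of the Cauchy--Binet formula (in the spirit of Stembridge's Pfaffian path-enumeration theorem, or the minor-summation formulas of Ishikawa--Wakayama). Such an identity converts $\sum_{C}\Pf(A^C)\,\Pf(B^C)$ into a single Pfaffian whose entries are themselves Pfaffians assembled from the individual factors, matching the claimed $2n\times 2n$ outer / $(2k+2)\times(2k+2)$ inner structure. The principal obstacle should be this last step: producing the precise algebraic identity that sums the product of two Pfaffians over all admissible cuts into a nested Pfaffian of exactly the prescribed dimensions, and then tracking the sign conventions and cyclic orderings through two Pfaffian levels while correctly handling degenerate cases in which an $\al$- or $\be$-dent is adjacent to the cut or to a corner.
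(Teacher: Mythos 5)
There is a genuine gap here, and it sits exactly where you flag it: the final ``Pfaffian Cauchy--Binet'' step is not an identity you can actually invoke in the form you need. Your cut decomposes the tiling count as $\sum_C \M(R_{\mathrm{low}}^C)\M(R_{\mathrm{up}}^C)$, where $C$ ranges over subsets of positions along the cut; the size of those subsets is dictated by the geometry of the cut (roughly the height function difference across it), not by $n$, so there is no reason the minor-summation machinery of Ishikawa--Wakayama or Stembridge would output a $2n\times 2n$ Pfaffian. Worse, even when such a summation formula applies, the entries of the resulting Pfaffian are themselves \emph{sums} over pairs of crossing positions of products of sub-Pfaffians --- not single Pfaffians ``of the type in the statement of Theorem~\ref{main_thm}'' as the theorem asserts. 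So the claimed nested structure would not come out of this route without substantial additional work, and you have not supplied the identity that would do it. (A smaller inaccuracy: Theorem~\ref{main_thm} is not characterized by a cyclic source/sink alternation; its hypothesis is that one of the three $\be$-admissible sides is dent-free, and it is proved by the generalized Kuo condensation, not by a Lindstr\"om--Gessel--Viennot/Stembridge path argument.)

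The idea you are missing is much simpler and requires no cutting at all: absorb $k$ of the up-pointing dents into the region itself. Let $D$ be $H^k_{a,b,c}$ with $k$ of the $\al_i$ removed, regarded as a fixed region; $D$ is balanced, and the remaining $2n$ dents ($n$ of type $\al$ and all $n$ of type $\be$) lie on its outer face. Applying the generalized Kuo condensation (Theorem~\ref{gk_thm}) to the planar dual of $D$ with these $2n$ vertices immediately gives the $2n\times 2n$ outer Pfaffian, with entries $\M(D\setminus\{\al_i,\be_j\})$ (the same-type entries vanish for parity reasons). Each such entry is a dented hexagon carrying a \emph{single} $\be$-dent, so at least one of the three $\be$-admissible sides is dent-free and Theorem~\ref{main_thm} applies verbatim, expressing the entry as a $(2k+2)\times(2k+2)$ Pfaffian of the prescribed type. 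This is the paper's argument, and it delivers exactly the dimensions and entry types in the statement with no summation over cut configurations.
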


In the special situation when the number of dents of the two types is the same (i.e., $k=0$), we can express the number of tilings as a Pfaffian with entries given by explicit formulas. Write for simplicity $H_{a,b,c}$ for ${H}_{a,b,c}^0$. 

\begin{theo} 
\label{kis0_thm}
Let $\al_1,\dotsc,\al_{n}$ be arbitrary dents of type $\al$ and $\be_1,\dotsc,\be_n$ arbitrary dents of type $\be$ along the boundary of ${H}_{a,b,c}$, and let $\de_1,\dotsc,\de_{2n}$ be a cyclic listing of the elements of $\{\al_1,\dotsc,\al_n\}\cup\{\be_1,\dotsc,\be_n\}$. Then
\begin{equation}
\label{kis0_expr}
\M(H_{a,b,c}\setminus\{\al_1,\dotsc,\al_{n},\be_1,\dotsc,\be_n\}) 
=
\frac{1}{\left[\M({H}_{a,b,c})\right]^{n-1}}
\Pf\left[\left(
\M({H}_{a,b,c}\setminus\{\de_i,\de_j\})
\right)_{1\leq i<j\leq 2n}\right],
\end{equation}
where the values of $\M({H}_{a,b,c}\setminus\{\de_i,\de_j\})$ are given explicitly as follows: $\M({H}_{a,b,c}\setminus\{\al_i,\be_j\})$ by Proposition~\ref{adjacent_prop} if $\al_i$ and $\be_j$ are on adjacent sides and by Proposition~\ref{opposite_prop} if $\al_i$ and $\be_j$ are on opposite sides, and $\M({H}_{a,b,c}\setminus\{\al_i,\al_j\})=\M({H}_{a,b,c}\setminus\{\be_i,\be_j\})=0$.

\end{theo}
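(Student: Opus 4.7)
The plan is to deduce formula~(\ref{kis0_expr}) from a general identity that holds for any planar bipartite graph admitting a perfect matching. Namely, if $G$ is such a graph and $v_1,\dotsc,v_{2n}$ are vertices lying in cyclic order on its outer face, then
\begin{equation}
\label{kis0_genidentity}
\M(G)^{n-1}\,\M(G\setminus\{v_1,\dotsc,v_{2n}\}) = \Pf\left[\bigl(\M(G\setminus\{v_i,v_j\})\bigr)_{1\leq i<j\leq 2n}\right].
\end{equation}
This is a classical consequence of Kasteleyn's theorem combined with the Pfaffian analogue of the Desnanot--Jacobi identity for skew-symmetric matrices; equivalently, it can be obtained from Stembridge's Pfaffian extension of the Lindstr\"om--Gessel--Viennot lemma, applied after translating lozenge tilings with prescribed boundary dents into systems of non-intersecting lattice paths whose endpoints are determined by the $\de_i$.

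Next, I would apply~(\ref{kis0_genidentity}) to $G$ equal to the planar dual of $H_{a,b,c}$, taking $v_1,\dotsc,v_{2n}=\de_1,\dotsc,\de_{2n}$. Since all $\al_i$ and $\be_j$ are by assumption unit triangles along the boundary of $H_{a,b,c}$, the corresponding vertices of the dual graph all lie on the outer face, and the cyclic boundary order on $H_{a,b,c}$ pulls back to the required cyclic order on the outer face of $G$. The vanishing of the entries $\M(H_{a,b,c}\setminus\{\al_i,\al_j\})$ and $\M(H_{a,b,c}\setminus\{\be_i,\be_j\})$ then follows from a bipartite parity argument: since $H_{a,b,c}$ has equally many up- and down-pointing unit triangles, removing two triangles of the same type unbalances the two color classes of the dual graph, preventing any perfect matching. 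The remaining entries $\M(H_{a,b,c}\setminus\{\al_i,\be_j\})$ are hexagons with one dent of each type, and are evaluated explicitly by Proposition~\ref{adjacent_prop} when $\al_i$ and $\be_j$ lie on adjacent sides and by Proposition~\ref{opposite_prop} when they lie on opposite sides (note that on any given side of the hexagon all dents are of the same type, so ``same-side'' does not arise for an $(\al_i,\be_j)$ pair).

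The hard part is establishing~(\ref{kis0_genidentity}) with the correct sign convention, i.e., verifying that every term on the right-hand side enters with the positive sign appropriate to the cyclic listing $v_1,\dotsc,v_{2n}$. Via Kasteleyn theory this reduces to choosing a Kasteleyn orientation of the dual graph for which planar matchings of the $v_i$ receive precisely the signs predicted by the cyclic order; via the LGV/Stembridge route it amounts to checking that only non-crossing pairings of source/sink vertices give non-vanishing contributions and that each such pairing carries the same sign. Both checks are standard for planar bipartite graphs with all distinguished vertices on a single face, so this step is more bookkeeping than substance. Once~(\ref{kis0_genidentity}) is in hand, substituting the explicit two-dent evaluations from Propositions~\ref{adjacent_prop} and~\ref{opposite_prop} yields~(\ref{kis0_expr}) immediately.
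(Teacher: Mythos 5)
Your argument is correct and is essentially the paper's own proof: the general Pfaffian identity you set out to establish is precisely Theorem~\ref{gk_thm} (the first author's generalization of Kuo condensation, quoted from \cite{gk}), which the paper applies directly to the planar dual of $H_{a,b,c}$ with the vertices $\de_1,\dotsc,\de_{2n}$, so there is no need to re-derive it from Kasteleyn or Lindstr\"om--Gessel--Viennot/Stembridge machinery. The remaining steps---vanishing of the same-type entries by the up-/down-pointing triangle count and evaluation of the mixed entries by Proposition~\ref{adjacent_prop} or Proposition~\ref{opposite_prop} according as $\al_i$ and $\be_j$ lie on adjacent or opposite sides---coincide with the paper's.
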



\section{Two families of regions with two dents}

The formulas in this section involve hypergeometric series. Recall that the hypergeometric series of parameters $a_1,\ldots,a_r$ and $b_1,\ldots,b_s$ is defined as 
$$
\pFq{r}{s}{a_1,\ldots,a_r}{b_1,\ldots,b_s}{z} = \sum_{k=1}^{\infty} 
\frac{(a_1)_k \cdots (a_r)_k}{(b_1)_k \cdots (b_s)_k} \frac{z^k}{k!}.
$$

\begin{prop}
\label{adjacent_prop}
Let $a,b,c,j,k$ be non-negative integers with $1 \le j \le a$ and $1 \le k \le c$.
The number of lozenge tilings of the hexagon $H_{a,b,c}$ with two dents on adjacent sides of length $a$ and $c$ in positions $j$ and $k$, respectively, as counted from the common vertex of the two sides (see Figure~\ref{adjacent_fig}, left) is
$$
\prod_{i=0}^{a-1} \frac{(c+i)_{b}}{(1+i)_{b}} 
\pFq{3}{2}{-a+j,b,-c+k}{1-a-c, 1+b}{1}  
\frac{(1+b)_{a-j} (j)_{k-1} (1+c-k)_{k-1}}{(1)_{a-j} (1)_{k-1} (1+b+c-k)_{k-1}}.
$$
\end{prop}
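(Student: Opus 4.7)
My plan is to use the non-intersecting lattice path model for lozenge tilings and the Lindstr\"om--Gessel--Viennot lemma to express $\M(H_{a,b,c}\setminus\{\al_j,\be_k\})$ as a determinant of binomial coefficients, then evaluate that determinant in closed form.

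Lozenge tilings of $H_{a,b,c}$ are in bijection with families of $c$ non-intersecting lattice paths joining the unit edges of one pair of parallel sides; LGV writes the count as $\det(P(s_i\to t_j))_{1\le i,j\le c}$ whose entries are binomial coefficients. The two dents perturb the canonical source/sink pattern in a localized way: the $\al$-dent at position $k$ on the side of length $c$ shifts a single source, while the $\be$-dent at position $j$ on the adjacent side of length $a$ shifts a single sink. The resulting matrix $M$ therefore differs from the canonical binomial matrix $C$ (whose determinant gives the undented $\M(H_{a,b,c})$ up to explicit product factors) in exactly one row and one column.

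Next I would reduce that $c\times c$ determinant to a single sum. Expanding along the perturbed row produces $c$ cofactors, each of which is either a canonical minor of $C$ or a minor with one additional column perturbation; both types admit closed-form product evaluations by Vandermonde/Krattenthaler-style identities. After collecting terms and pulling out the product prefactor $\prod_{i=0}^{a-1}(c+i)_{b}/(1+i)_{b}$ together with a location-dependent rational factor that records where the shifted source and sink sit (expected to collapse to $(1+b)_{a-j}(j)_{k-1}(1+c-k)_{k-1}/((1)_{a-j}(1)_{k-1}(1+b+c-k)_{k-1})$), what remains is a single finite sum. Matching parameters identifies it with $\pFq{3}{2}{-a+j,b,-c+k}{1-a-c,1+b}{1}$: the negative-integer upper parameters $-a+j$ and $-c+k$ come from the termination conditions imposed by $j\le a$ and $k\le c$, and the remaining parameters come from the Pochhammer structure of the cofactor evaluations.

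The main obstacle will be the bookkeeping in this last step. The raw output of the cofactor expansion is a ratio of products that is not manifestly in $_3F_2$ form; extracting the sum in exactly the stated normalization, with exactly the stated rational prefactor, requires careful tracking of signs, Pochhammer shifts, and indexing offsets, and an equivalent but differently parametrized single sum can easily pop out instead. To guard against errors I would verify the formula in a handful of degenerate cases before trusting the general identity: $j=a$, $k=1$, or $a=c$ with $j=k$ (where the hexagon acquires extra symmetry), and small hexagons with all admissible $(j,k)$, which can be checked either by direct enumeration or via Propositions~\ref{CLP} and~\ref{gk_regions_prop}.
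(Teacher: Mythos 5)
Your overall strategy (LGV determinant plus a cofactor expansion) is a reasonable thing to try, but it has a genuine gap at its central step, and it is in any case not the route the paper takes: the paper proves the formula by induction on $a+b+c$, using Kuo's condensation (Theorem~\ref{kuo1}) to derive the recurrence
$\adj(a,b,c)_{j,k}\,\adj(a-1,b,c-1)_{j,k}=\adj(a,b,c-1)_{j,k}\,\adj(a-1,b,c)_{j,k}+\adj(a-1,b+1,c-1)_{j,k}\,\adj(a,b-1,c)_{j,k}$,
and then verifying that the closed form satisfies it via a chain of contiguous relations and Chu--Vandermonde summations.

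The gap is your claim that, after expanding the perturbed determinant along the modified row, each cofactor ``admits closed-form product evaluations by Vandermonde/Krattenthaler-style identities.'' Set up the paths in the natural way, say from the side of length $c$ to the opposite side of length $c$; the $\be$-dent alters the source set and the $\al$-dent on the adjacent side contributes a new source, so the matrix is the consecutive binomial matrix $C$ with one interior row replaced. Expanding along that row produces minors of $C$ obtained by deleting one \emph{interior} row and one \emph{interior} column. Such a minor enumerates, via LGV, lozenge tilings of a hexagon with one dent on each of two \emph{opposite} sides at interior positions --- and by the paper's own Proposition~\ref{opposite_prop} these counts are given by a terminating $\pFq{4}{3}{\cdot}{\cdot}{1}$, not by a product of linear factors. (Product evaluations of CLP/Gelfand--Tsetlin type require one of the two index sets to remain an interval, which fails here.) Consequently your expansion yields a sum of single sums, i.e.\ a double sum, and the claimed reduction to the stated $\pFq{3}{2}{\cdot}{\cdot}{1}$ does not follow. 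To repair the argument you would need either a genuinely different determinant evaluation (e.g.\ Krattenthaler's identification-of-factors method applied to the full perturbed determinant, or a known minor-summation identity) or a different method altogether, such as the condensation-plus-induction argument of the paper. Note also that, as written, the proposal defers all of the actual computation --- the sign/Pochhammer bookkeeping and the matching with the stated prefactor are announced rather than carried out --- and spot-checking degenerate cases such as $j=a$ or $k=1$ cannot substitute for that verification.
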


\begin{figure}[ht]
\begin{minipage}[b]{0.49\linewidth}
\centering
\scalebox{0.6}{\includegraphics[width=\textwidth]{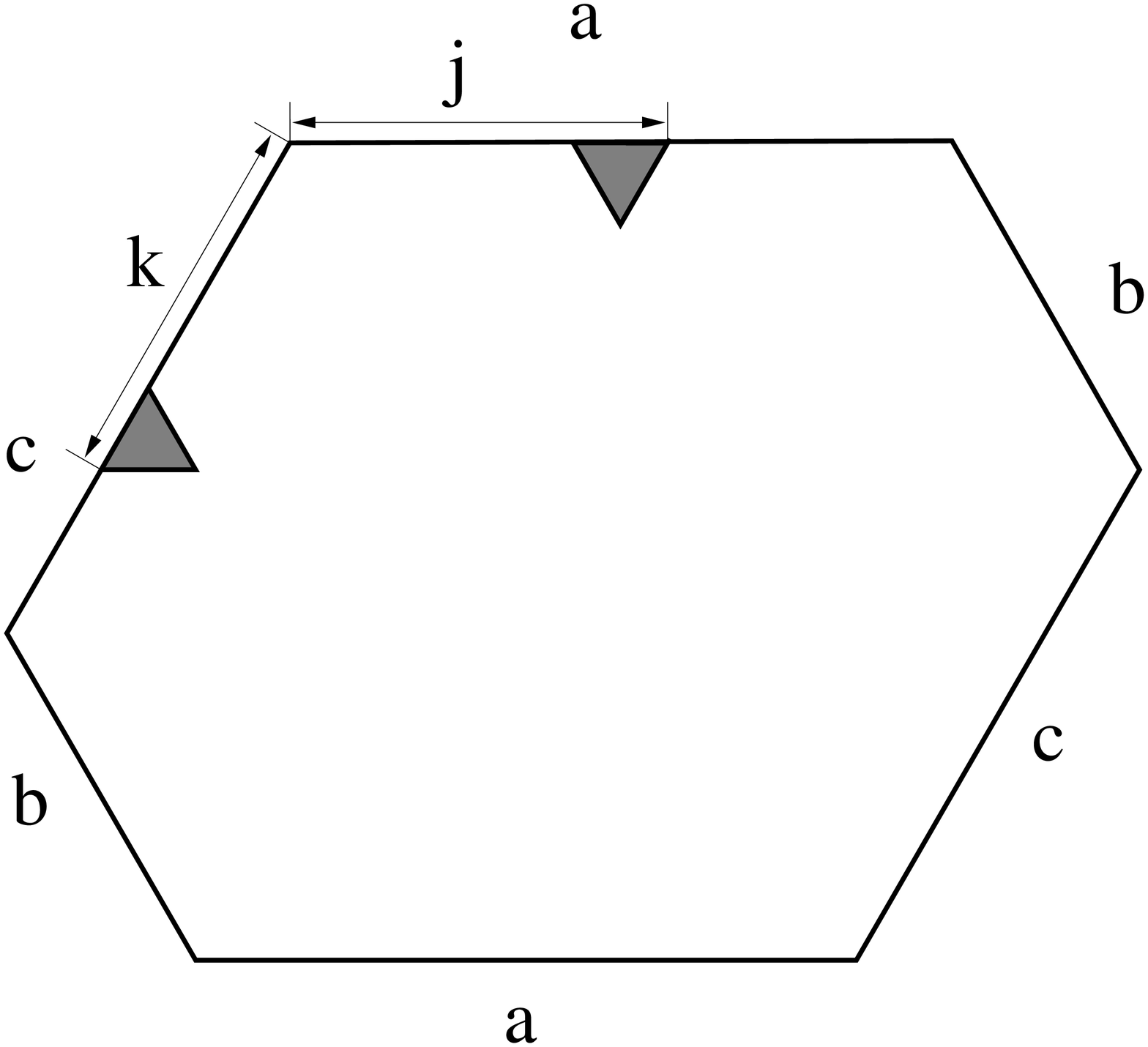}}
\end{minipage}
\begin{minipage}[b]{0.49\linewidth}
\centering
\scalebox{0.6}{\includegraphics[width=\textwidth]{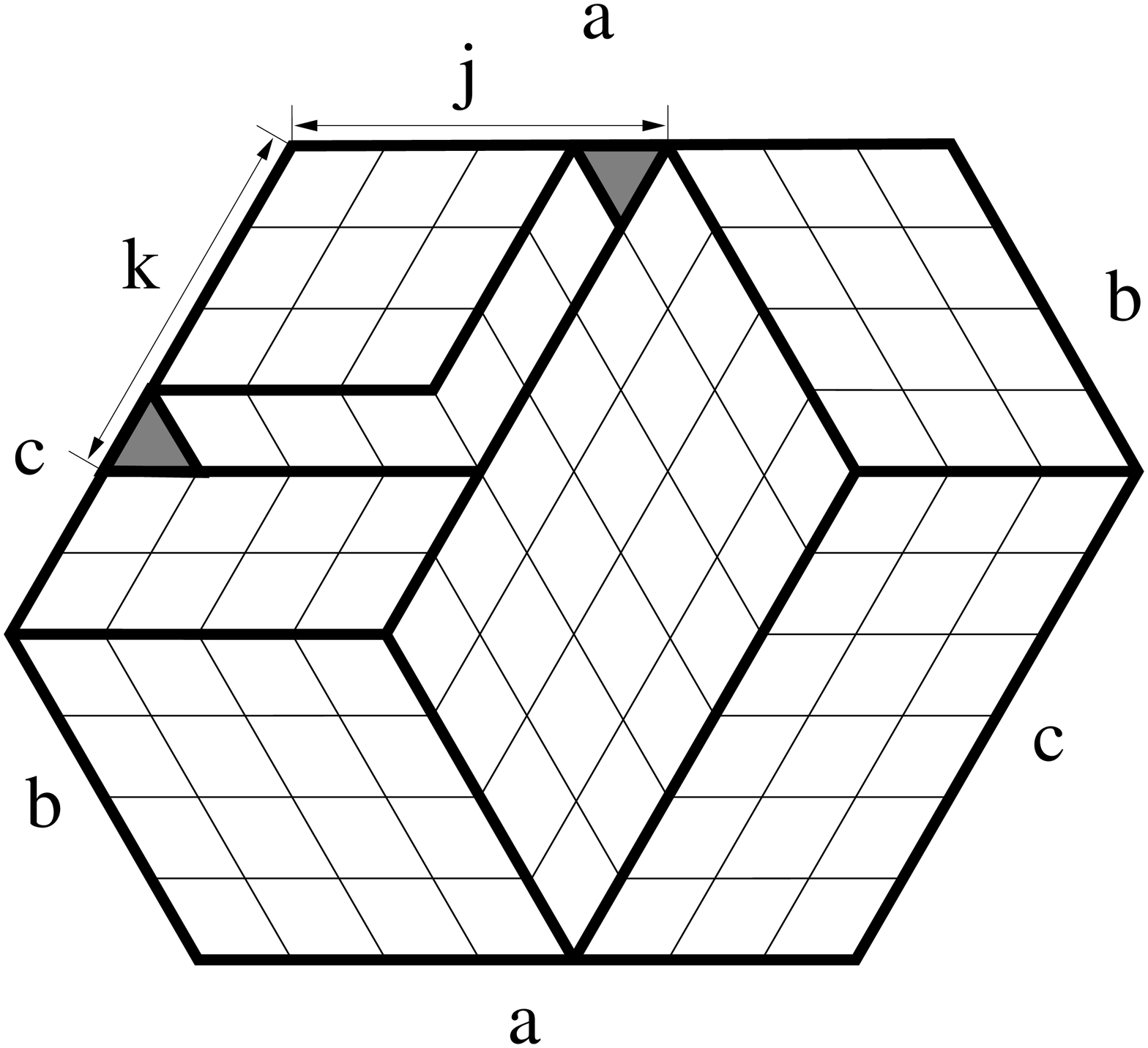}}
\end{minipage}
\caption{\label{adjacent_fig} The region in Proposition~\ref{adjacent_prop} and a canonical lozenge tiling.}
\end{figure}

The main ingredient for the proof of the proposition is the following 
theorem of Kuo. As a matter of fact, we will see later in Section~\ref{proofmainresult} that the proof of the main result of this paper is also based on the first author's generalization \cite{gk} of Kuo's result.

\begin{theo} \cite[Theorem 2.1]{Kuo}
\label{kuo1}
Let $G=(V_1,V_2,E)$ be a plane bipartite graph and $w,x,y,z$ vertices of $G$ that appear in cyclic order on a face of $G$. If $w,y \in V_1$ and $x,z \in V_2$ then
$$
\M(G) \M(G - \{w,x,y,z\}) =
\M(G - \{w,x\}) \M(G - \{y,z\}) + \M(G - \{w,z\}) 
\M(G - \{x,y\}).
$$
\end{theo}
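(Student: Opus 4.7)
The plan is to prove Kuo's identity by a direct combinatorial bijection on pairs of perfect matchings, in the standard ``path-swapping'' style. First, interpret the left-hand side $\M(G)\,\M(G-\{w,x,y,z\})$ as the set of ordered pairs $(M_1,M_2)$ where $M_1$ is a perfect matching of $G$ and $M_2$ is a perfect matching of $G-\{w,x,y,z\}$. Form the superposition $S=M_1\triangle M_2$: each of $w,x,y,z$ has degree $1$ in $S$ while every other vertex has degree $0$ or $2$, so $S$ decomposes into isolated vertices, alternating cycles, and exactly two vertex-disjoint paths whose endpoints pair up $w,x,y,z$. Along each such path the edges alternate between $M_1$ and $M_2$; since $w,x,y,z$ are missing from $M_2$, both end-edges of every path are $M_1$-edges, so the path has odd length and its endpoints lie in opposite color classes of the bipartition. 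Because $w,y\in V_1$ and $x,z\in V_2$, this rules out the diagonal pairing $\{w,y\},\{x,z\}$ and leaves only the two cases $\{w,x\},\{y,z\}$ and $\{w,z\},\{x,y\}$.

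Next, carry out the analogous analysis on each right-hand side summand. For pairs $(N_1,N_2)$ counted by $\M(G-\{w,x\})\,\M(G-\{y,z\})$, the superposition $N_1\triangle N_2$ again decomposes into cycles and two vertex-disjoint paths on $\{w,x,y,z\}$, and bipartite parity now allows two possibilities: the ``parallel'' pairing $\{w,x\},\{y,z\}$ or the ``crossing'' pairing $\{w,y\},\{x,z\}$. The planarity hypothesis eliminates the crossing case via the standard topological fact that in a plane graph two vertex-disjoint paths cannot have endpoints interleaving on the boundary of a single face (a consequence of the Jordan curve theorem). Hence only the parallel pairing survives, and symmetrically pairs counted by $\M(G-\{w,z\})\,\M(G-\{x,y\})$ correspond exactly to the pairing $\{w,z\},\{x,y\}$.

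Finally, define the bijection by ``flipping'' the path through $w$. Given $(M_1,M_2)$ in case $\{w,x\},\{y,z\}$, let $P$ be the path from $w$ to $x$ in $M_1\triangle M_2$, and set
\[
N_1:=(M_1\setminus E(P))\cup(M_2\cap E(P)),\qquad N_2:=(M_2\setminus E(P))\cup(M_1\cap E(P)).
\]
A direct degree count confirms that $N_1$ misses exactly $w$ and $x$ while $N_2$ misses exactly $y$ and $z$, so $(N_1,N_2)$ lies in $\M(G-\{w,x\})\,\M(G-\{y,z\})$. In the case $\{w,z\},\{x,y\}$, the same flip applied to the $w$-to-$z$ path lands in $\M(G-\{w,z\})\,\M(G-\{x,y\})$. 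The inverse map performs the identical flip on the path through $w$ in $N_1\triangle N_2$, and summing the two cases yields the identity. I expect the main obstacle to be the planarity step: making airtight the claim that two vertex-disjoint paths in a planar graph cannot have endpoints interleaved on a single face boundary. The cleanest route is to close one such path off by an arc inside $F$ joining its two endpoints, obtaining a closed Jordan curve that must separate the other two distinguished endpoints, and then to observe that the second path would have to cross this curve and hence share a vertex with the first. As a fallback, the identity can alternatively be derived algebraically from Dodgson's/Desnanot--Jacobi condensation applied to the Kasteleyn matrix of the planar bipartite graph $G$, with the Kasteleyn sign convention accounting for the positive sign on the right-hand side.
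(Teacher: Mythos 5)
This statement is quoted by the paper from Kuo's article (\cite[Theorem 2.1]{Kuo}) and is not proved in the paper itself, so there is no internal proof to compare against; your argument is, in essence, Kuo's original superposition proof, and it is correct. The case analysis is right on both sides: on the left, the two paths in $M_1\triangle M_2$ must have odd length (both end-edges in $M_1$), which by bipartiteness forces the pairing to be $\{w,x\},\{y,z\}$ or $\{w,z\},\{x,y\}$; on the right, parity leaves the parallel and the crossing pairings, and the crossing one is killed by the Jordan-curve argument you describe (the arc closing the $w$--$y$ path can be drawn in the interior of the common face, which contains no vertices or edges of $G$, so the $x$--$z$ path would have to meet the $w$--$y$ path in a vertex, contradicting disjointness). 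The flip along the path through $w$ preserves the symmetric difference $M_1\triangle M_2=N_1\triangle N_2$, which is exactly why it is an involution and hence a bijection between each case of the left-hand side and the corresponding product on the right. The only point worth stating explicitly is that isolated doubled edges of the multiset union correspond to edges cancelled in the symmetric difference, so they never interfere with the two paths; you have effectively handled this by working with $\triangle$ throughout. Your Kasteleyn/Desnanot--Jacobi fallback is also a legitimate alternative, though the sign bookkeeping there is more delicate than the combinatorial route you carried out.
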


Another important tool are various \emph{contiguous relations} for hypergeometric series. We 
found the systematic list provided in Krattenthaler's documentation\footnote{The documentation can be downloaded from {\tt http://www.mat.univie.ac.at/\~\,kratt/hyp\_hypq/hypm.pdf}.} for the computer package HYP \cite{kratt} helpful and use the notation that was introduced there. The list is based on identities given in \cite{gaspar}. Krattenthaler also proves the identities in an unpublished manuscript \cite{kratt1}. 

The concrete list of contiguous relations needed in the proof of Proposition~\ref{adjacent_prop} is the following. In these relations,
 $(A)$ and $(B)$ stand for  lists $A_1,A_2,A_3,\ldots$ and $B_1,B_2,B_3,\ldots$ of the appropriate lengths. \begin{align*}
\pFq{r}{s}{x,(A)}{y,(B)}{z} &\stackrel{{\tt C40}[x,y]}{\rightarrow} \pFq{r}{s}{x-1,(A)}{y-1,(B)}{z} 
+ \frac{(y-x) z}{(y-1)y} \frac{\prod\limits_{i=1}^{r-1} A_i }{\prod\limits_{i=1}^{s-1} B_i} \pFq{r}{s}{x,(A+1)}{y+1,(B+1)}{z} \\
\pFq{r}{s}{x,(A)}{y,(B)}{z} &\stackrel{{\tt C42}[x,y]}{\rightarrow} \frac{(y-2)(y-1)}{(y-x-1) z} \frac{\prod\limits_{i=1}^{s-1} (B_i - 1)}{\prod\limits_{i=1}^{r-1} (A_i-1)} \pFq{r}{s}{x,(A-1)}{y-1,(B-1)}{z} \\
& \qquad \qquad - \frac{(y-2)(y-1)}{(y-x-1) z} \frac{\prod\limits_{i=1}^{s-1} (B_i - 1)}{\prod\limits_{i=1}^{r-1} (A_i-1)} \pFq{r}{s}{x-1,(A-1)}{y-2,(B-1)}{z} \\
 \pFq{r}{s}{w,x,(A)}{y,(B)}{z} &\stackrel{{\tt C54}[w,x,y]}{\rightarrow} \frac{x(y-w)}{(x-w)y} \pFq{r}{s}{w,x+1,(A)}{y+1,(B)}{z} \\
& \qquad \qquad + \frac{w(y-x)}{(w-x) y} \pFq{r}{s}{w+1,x,(A)}{y+1,(B)}{z} \\
\pFq{r}{s}{w,x,(A)}{y,(B)}{z} &\stackrel{{\tt C55}[w,x,y]}{\rightarrow} \frac{(1-w+x)(y-1)}{(w-1)(1+x-y)} \pFq{r}{s}{w-1,x,(A)}{y-1,(B)}{z} \\
\\ & \qquad \qquad + \frac{x(y-w)}{(w-1)(y-x-1)} \pFq{r}{s}{w-1,x+1,(A)}{y,(B)}{z} 
\end{align*}
We shall also apply the Chu-Vandermonde summation which reads in hypergeometric notation as 
\begin{equation}
\label{chuvandermonde}
\pFq{2}{1}{x,-n}{y}{1} \stackrel{{\tt S2101}}{\rightarrow} \frac{(y-x)_n}{(y)_n},
\end{equation}
where $n$ is a non-negative integer.

\begin{proof}[Proof of Proposition~\ref{adjacent_prop}]
We prove the proposition by induction with respect to $a+b+c$. The base case of the induction follows if we show the formula for $a=1$, for $b=0$ and for $c=1$. For our argument we also need to check the cases $a=j$ and $c=k$ individually. However, the cases $a=1$ and $c=1$ follow from the cases $a=j$ and $c=k$, respectively.

\begin{figure}[ht]
\begin{minipage}[b]{0.49\linewidth}
\centering
\scalebox{0.8}{\includegraphics[width=\textwidth]{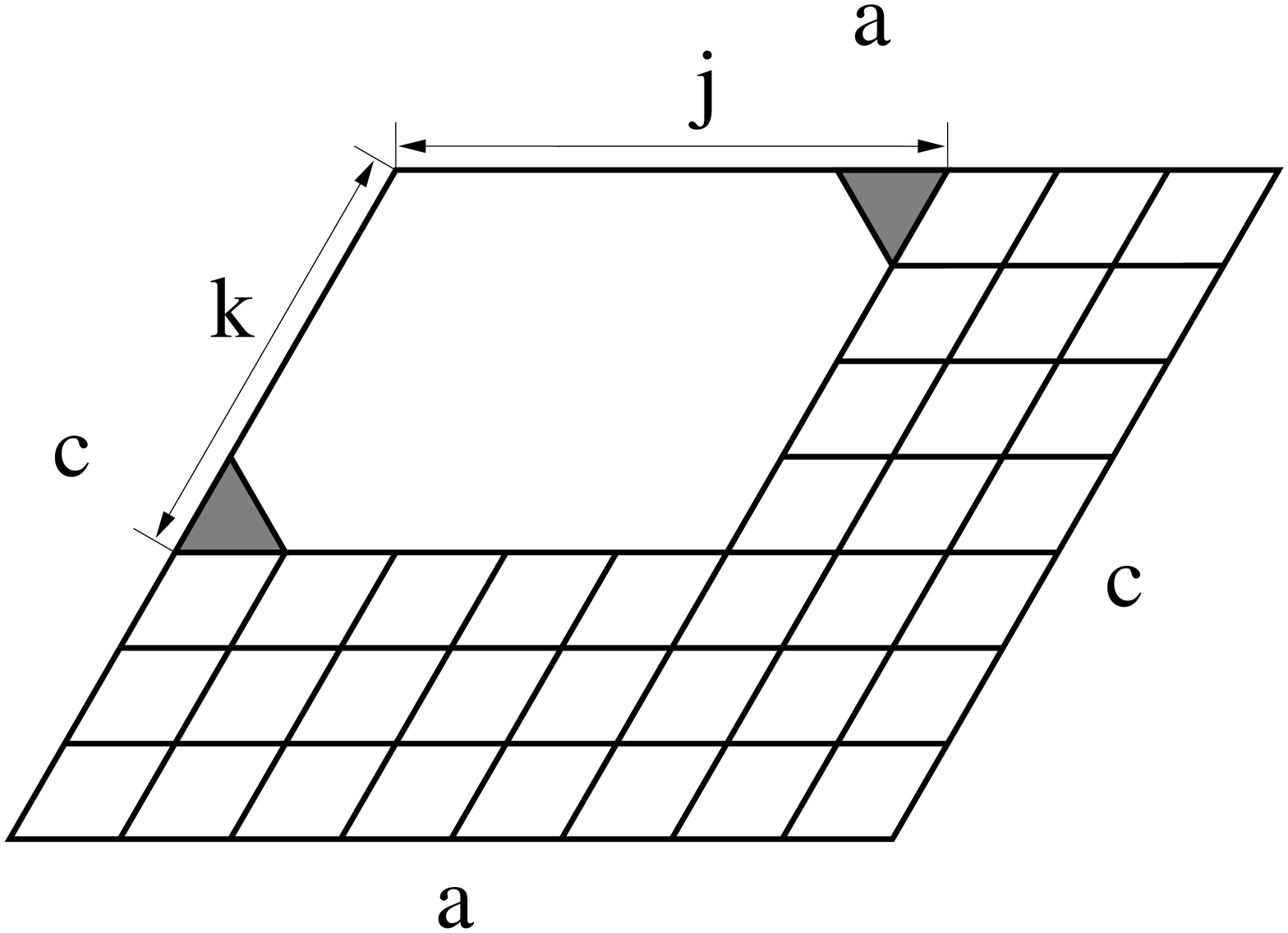}}
\end{minipage}
\begin{minipage}[b]{0.49\linewidth}
\centering
\scalebox{0.7}{\includegraphics[width=\textwidth]{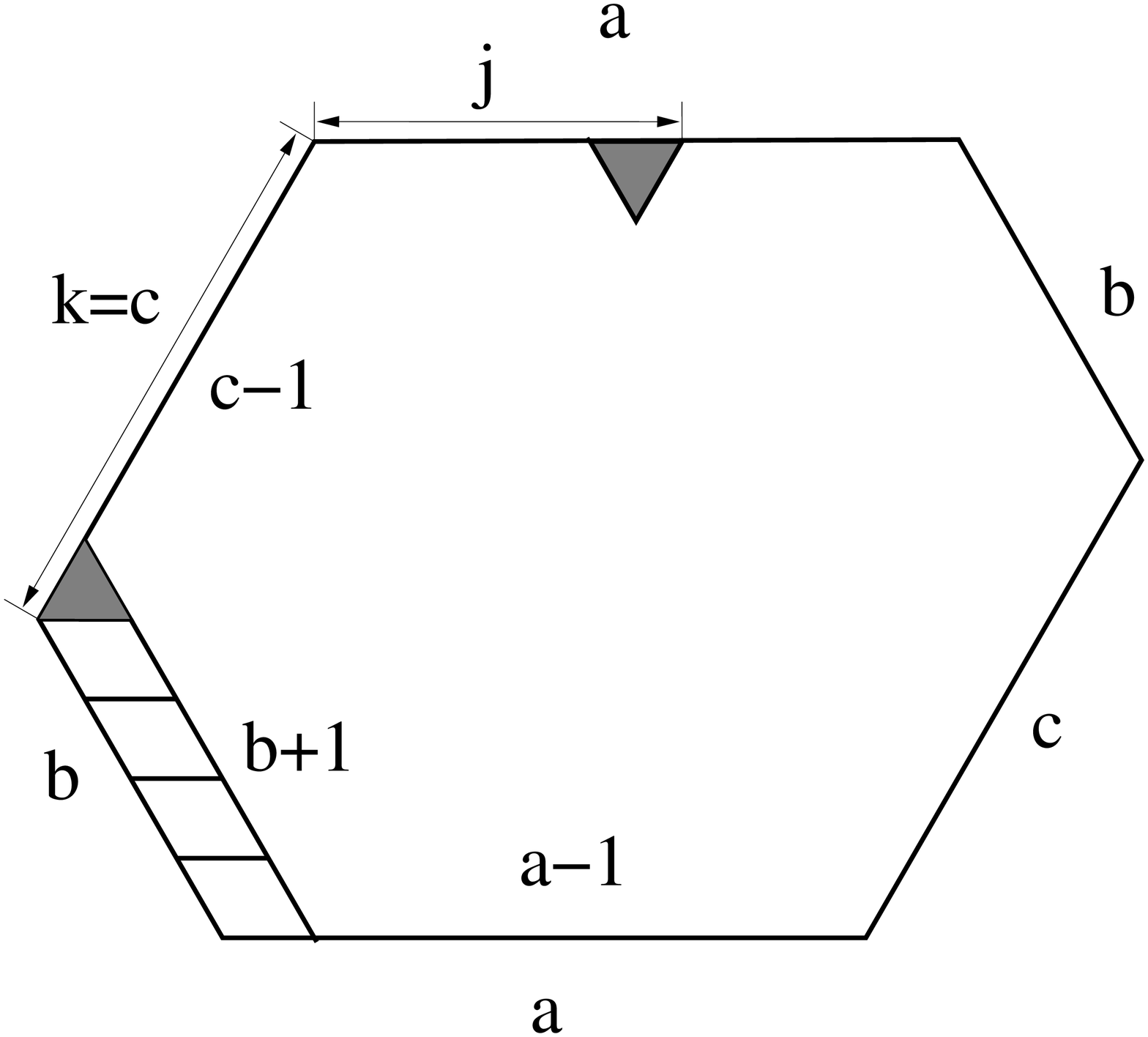}}
\end{minipage}
\caption{\label{a1_fig} The cases $b=0$ and $c=k$ in Proposition~\ref{adjacent_prop}.}
\end{figure}

\emph{Case $b=0$:}
The number is equal to the number of lozenge tilings of a hexagon with side 
lengths $j-1, 1, k-1, j-1, 1, k-1$, see Figure~\ref{a1_fig} left. The result follows again from \eqref{MacM_eq}.

\emph{Case $c=k$:}
The number is equal to the number of lozenge tilings of a hexagon with side 
lengths $a, b,c,a-1,b+1,c-1$ with a dent in position $j$ on the side of length $a$ as counted from the common vertex of this side with the side of length $c-1$, see Figure~\ref{a1_fig} right. This is a special case of Proposition~\ref{gk_regions_prop} (set $k=0$ there) or of Proposition~\ref{CLP}.

The case $a=j$ is symmetric to the case $c=k$.

\begin{figure}
\scalebox{0.25}{\includegraphics{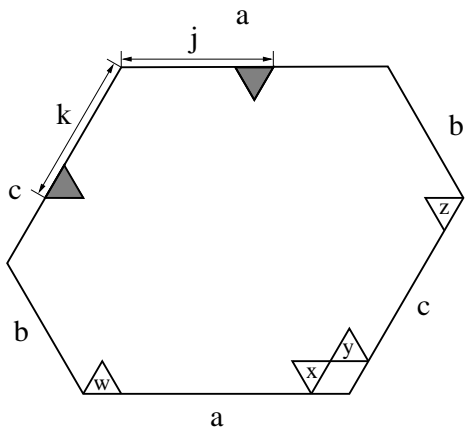}}
\caption{\label{adjacentkuo_fig} The special vertices $w,x,y,z$ in Kuo's condensation in Proposition~\ref{adjacent_prop}.}
\end{figure}

From now on we assume $a,c \ge 2$, $b \ge 1$, $j < a$ and $k < c$, and let $\adj(a,b,c)_{j,k}$ denote the number of lozenge tilings of the region. It is a well-known fact that lozenge tilings correspond to matchings of hexagonal grids and so we may use Kuo's condensation to derive a recursion for $\adj(a,b,c)_{j,k}$: we choose $w,x,y,z$ as indicated in Figure~\ref{adjacentkuo_fig}. We need to interpret the six expressions in the identity in Theorem~\ref{kuo1} in our special setting; 
$\M(G)$ counts of course all lozenge tilings of the region. In the other five cases it turns out that -- after deleting forced lozenges -- the respective quantity counts lozenge tilings of a region of the same type with changed parameters. For instance, if we delete all four triangles $w, x,y,z$, then the hexagon has now side lengths $a-1,b,c-1,a-1,b,c-1$, while the positions of the two dents is still $j$ and $k$ along adjacent the sides of lengths $a-1$ and $c-1$, respectively. 
Graphical explanations are provided in Figures~\ref{adjacentkuo12_fig}, \ref{adjacentkuo34_fig} and \ref{adjacentkuo5_fig}. In total, we obtain the following identity. 
\begin{multline}
\label{rec}
\adj(a,b,c)_{j,k} \adj(a-1,b,c-1)_{j,k} \\ = \adj(a,b,c-1)_{j,k} \adj(a-1,b,c)_{j,k} + \adj(a-1,b+1,c-1)_{j,k} \adj(a,b-1,c)_{j,k} 
\end{multline}

In Figure~~\ref{adjacent_fig} we have indicated how to construct a canonical lozenge tiling of the region under consideration  for any choice of non-negative integers $a,b,c,j,k$ with $1 \le j \le a$ and $1 \le k \le c$.
This implies that $\adj(a-1,b,c-1)_{j,k}$ is non-zero since we assume $1 \le j \le a-1$ and $1 \le k \le c-1$. This allows us to divide
\eqref{rec} by $\adj(a-1,b,c-1)_{j,k}$ and provides the recursion for $\adj(a,b,c)_{j,k}$. Indeed, the halved sum of the side lengths of the hexagonal regions of the enumerative quantities on the right-hand side are then all strictly less than $a+b+c$. 

\begin{figure}[ht]
\begin{minipage}[b]{0.49\linewidth}
\centering
\scalebox{0.6}{\includegraphics[width=\textwidth]{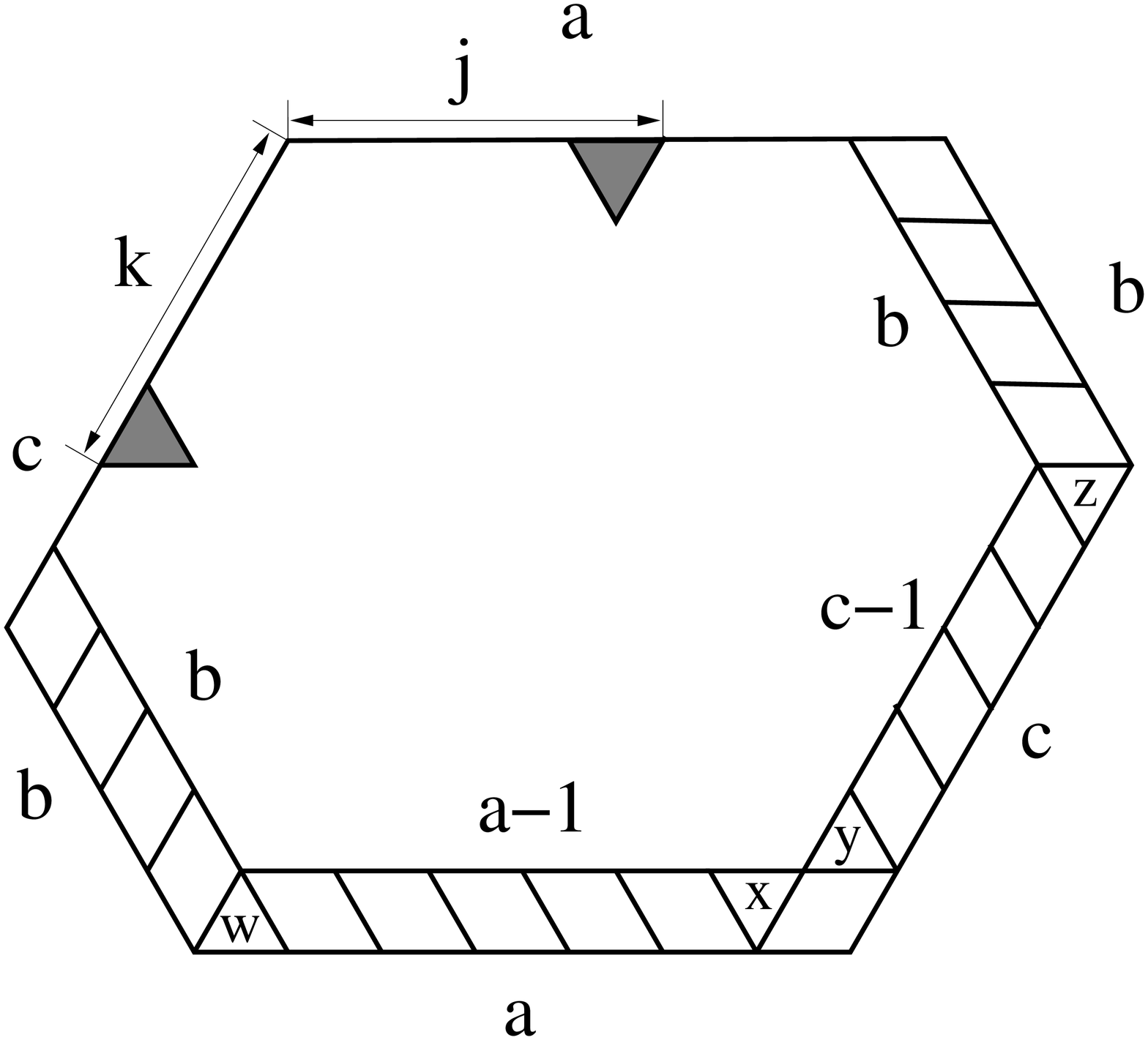}}
\end{minipage}
\begin{minipage}[b]{0.49\linewidth}
\centering
\scalebox{0.6}{\includegraphics[width=\textwidth]{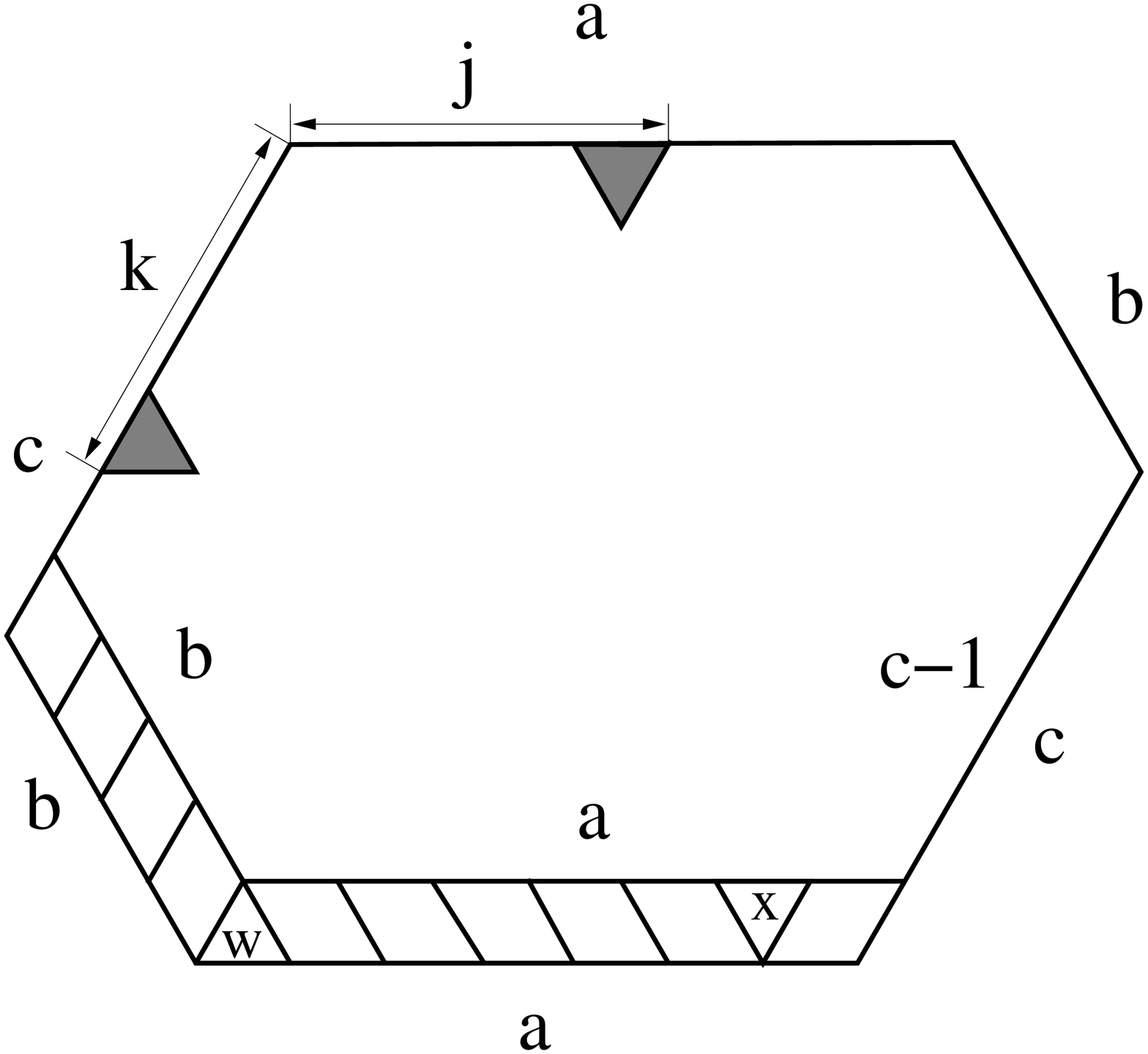}}
\end{minipage}
\caption{\label{adjacentkuo12_fig} $\M(G-\{w,x,y,z\})=\adj(a-1,b,c-1)_{j,k}$ and $\M(G-\{w,x\})=\adj(a,b,c-1)_{j,k}$.}
\end{figure}

\begin{figure}[ht]
\begin{minipage}[b]{0.49\linewidth}
\centering
\scalebox{0.6}{\includegraphics[width=\textwidth]{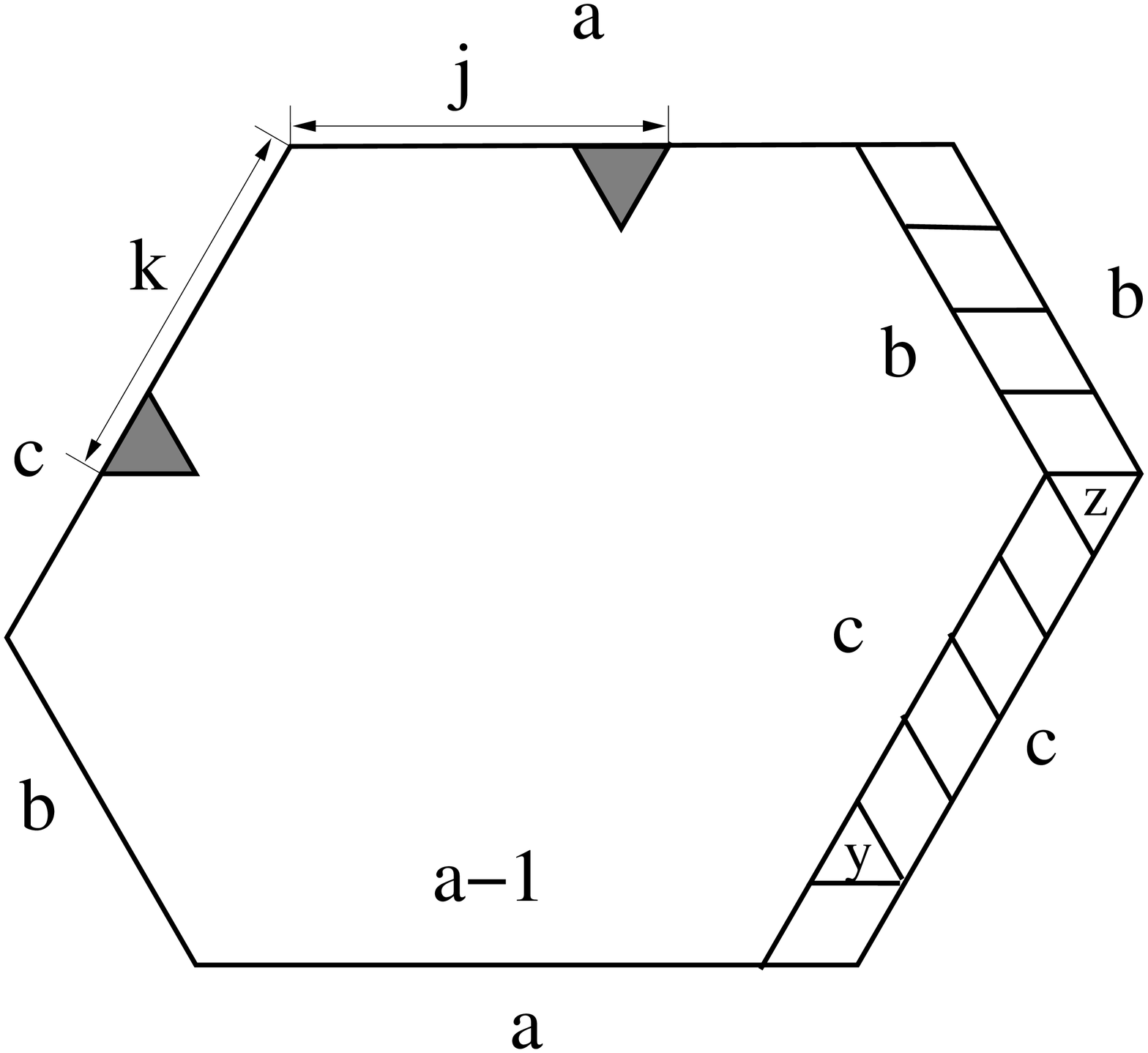}}
\end{minipage}
\begin{minipage}[b]{0.49\linewidth}
\centering
\scalebox{0.6}{\includegraphics[width=\textwidth]{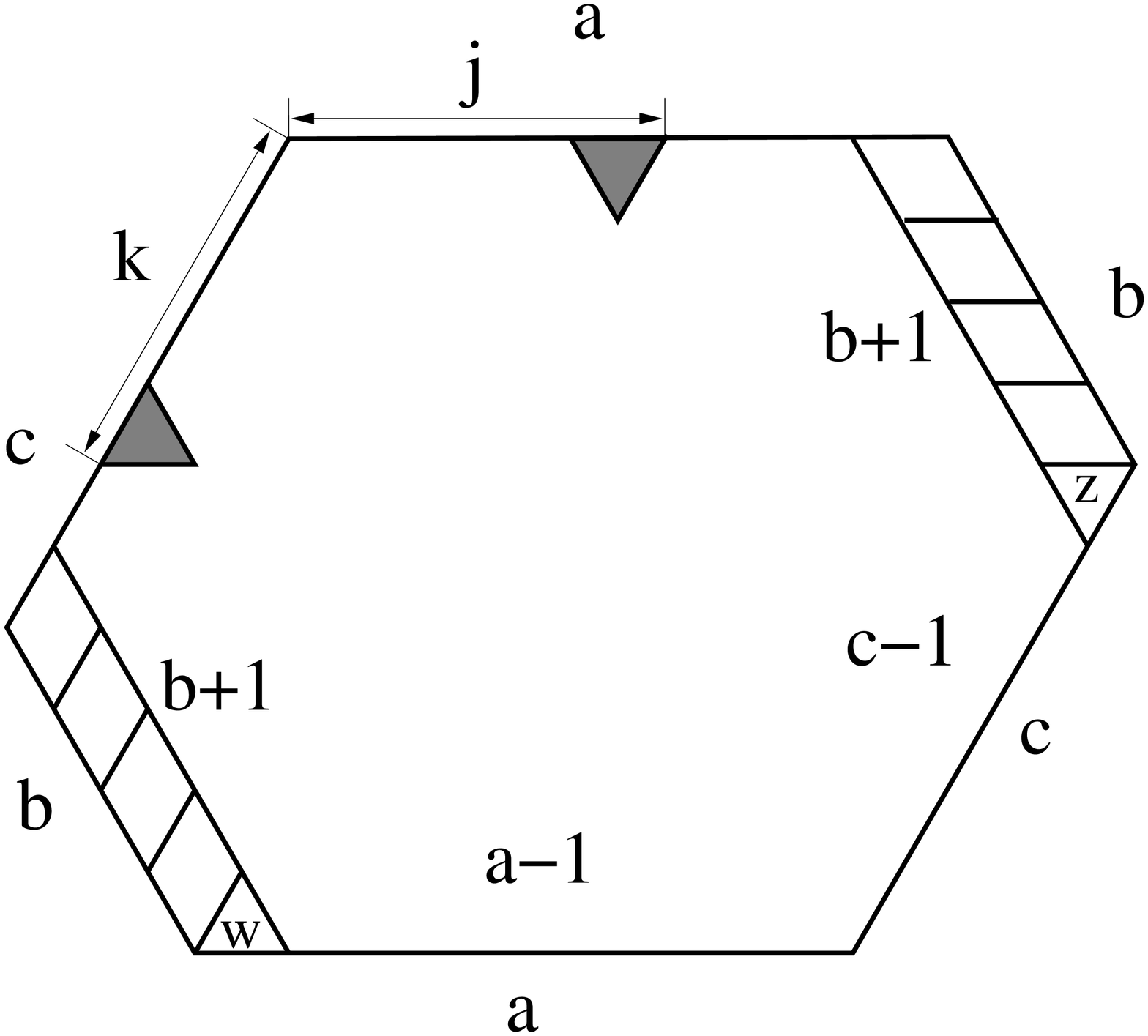}}
\end{minipage}
\caption{\label{adjacentkuo34_fig} $\M(G-\{y,z\})=\adj(a-1,b,c)_{j,k}$ and $\M(G-\{w,z\})=\adj(a-1,b+1,c-1)_{j,k}$.}
\end{figure}

\begin{figure}
\scalebox{0.22}{\includegraphics{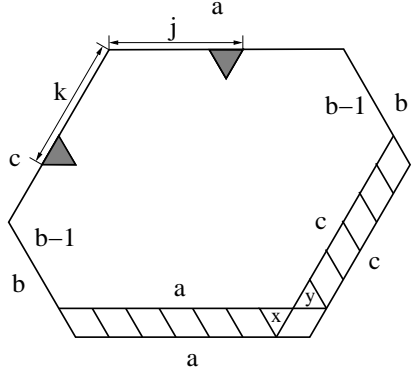}}
\caption{\label{adjacentkuo5_fig} $\M(G - \{x,y\})=\adj(a,b-1,c)_{j,k}$}
\end{figure}

Now it remains to show that the expression in the statement of the lemma fulfills \eqref{rec}. In doing so, we were assisted by 
Krattenthaler's  Mathematica package HYP for the manipulation of hypergeometric identities \cite{kratt}. We move all terms in 
\eqref{rec} to the left-hand side, plug in the expression for $\adj(a,b,c)_{j,k}$ and obtain an identity of the following structure:
\begin{multline}
\label{six}
 - \pFq{3}{2}{-a+j,-1+b,-c+k}{1-a-c, b}{1}  \pFq{3}{2}{1-a+j,1+b,1-c+k}{3-a-c, 2+b}{1}  \times \fpp_1\\
 - \pFq{3}{2}{-a+j,b,1-c+k}{2-a-c, 1+b}{1}  \pFq{3}{2}{1-a+j,b,-c+k}{2-a-c, 1+b}{1} \times  \fpp_2 \\
 + \pFq{3}{2}{-a+j,b,-c+k}{1-a-c, 1+b}{1}  \pFq{3}{2}{1-a+j,b,1-c+k}{3-a-c, 1+b}{1}  \times \fpp_3  = 0
\end{multline}
Here, $\fpp_i$, $i=1,2,3$, stands for certain fractions of products of Pochhammer functions. 

The six hypergeometric series in \eqref{six} differ from each other in every parameter by integer values of at most $2$. Our strategy is to apply contiguous relations in such a way that 
the resulting expression contains only one of these hypergeometric series -- multiple occurrences possible. In fact, this expression is then a polynomial of degree no greater than $2$ in this hypergeometric series. However, as it turns out, the three coefficients (which are sums of fractions of products of Pochhammer functions) of the polynomial vanish.

We start by applying ${\tt C55}[1-c+k,b,3-a-c]$ to 
$$
\pFq{3}{2}{1-a+j,b,1-c+k}{3-a-c, 1+b}{1}.
$$
The results is 
\begin{multline*}
\frac{b (2-a-k)}{(2-a-b-c)(-c+k)} \pFq{2}{1}{1-a+j,-c+k}{3-a-c}{1}  \\ + 
\frac{(2-a-c)(b+c-k)}{(-2+a+b+c)(-c+k)} \pFq{3}{2}{1-a+j,b,-c+k}{2-a-c,1+b}{1}.
\end{multline*}
We have a cancellation of an upper parameter with a lower parameter in the first hypergeometric series so that one 
$_{3} F_{2}$-series is converted into a $_{2} F_{1}$-series  and Chu-Vandermonde summation \eqref{chuvandermonde} can be applied (both $-1+a-j$ and $c-k$ are non-negative integers). The remaining $_{3} F_{2}$-series also appears elsewhere on the left-hand side of \eqref{six} and so we have reduced the number of different $_{3} F_{2}$-series from six to five. This will be the typical situation in our computation.

Next we apply ${\tt C55}[1-c+k,b,2-a-c]$ to 
$$
\pFq{3}{2}{-a+j,b,1-c+k}{2-a-c, 1+b}{1}.
$$
Again we have a cancellation so that one hypergeometric series is actually a $_{2} F_{1}$-series and Chu-Vandermonde can be applied. The other series is
$$
\pFq{3}{2}{-a+j,b,-c+k}{1-a-c,1+b}{1}
$$
and this is also a series appearing elsewhere in the expression.

Now we apply to ${\tt C54}[-a+j,b,1-a-c]$ to the two copies of
$$
\pFq{3}{2}{-a+j,b,-c+k}{1-a-c, 1+b}{z}
$$
in the expression.
This leads to
$$
\pFq{3}{2}{1-a+j,b,-c+k}{2-a-c,1+b}{1}
$$
as well as an $_{2} F_{1}$-series where Chu-Vandermonde summation is applicable.

After applying ${\tt C40}[-c+k,b]$ to 
$$
\pFq{3}{2}{-a+j,-1+b,-c+k}{1-a-c,b}{1},
$$
the two remaining series are 
$$
\pFq{3}{2}{1-a+j,1+b,1-c+k}{3-a-c, 2+b}{1}  \quad \text{and} \quad
\pFq{3}{2}{1-a+j,b,-c+k}{2-a-c,1+b}{1}. 
$$
We apply ${\tt C42}[1-a+j,2+b]$ to the 
first series and obtain an expression such that, after applying Chu-Vandermonde another time,
the second hypergeometric series is the only one appearing in our expression. The expression is then a polynomial of degree no greater than $2$ in this series. It is tedious but routine to check that the coefficients of the polynomial are in fact zero.  
\end{proof}

\begin{prop} 
\label{opposite_prop} Let $a,b,c,i,j$ be positive integers with $1 \le i, j \le a$.
The number of lozenge tilings of the hexagon $H_{a,b,c}$ with two dents in positions $i$ and $j$ along opposite sides of length $a$ (see Figure~\ref{opposite_fig}, left) is
\begin{multline*}
\prod_{k=0}^{a-2} \frac{(1+c+k)_b}{(1+k)_b} 
\pFq{4}{3}{1-i,1-j,1-c-j,1+a+b-j}{2-c-j,1+b-j,2+a-i-j}{1} \\ \times
\frac{(c)_{j-1} (1+b-j)_{i-1} (2+a-i-j)_{i+j-2}}{(1)_{i-1} (1)_{j-1} (1+a+c-i)_{i-1} (1+a+b-j)_{j-1}},
\end{multline*}
where the position of the first dent is counted from the common vertex of the respective side of length $a$ with the side of length $b$, while the position of the second dent is counted from the common vertex of the respective side of length $a$ with the side of length $c$. \end{prop}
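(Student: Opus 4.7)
The plan is to mirror the proof of Proposition~\ref{adjacent_prop}: induct on $a+b+c$, use Kuo's graphical condensation (Theorem~\ref{kuo1}) to derive a bilinear recurrence for the tiling count $\opp(a,b,c)_{i,j}$, and verify that the proposed closed form satisfies this recurrence by systematic manipulation of hypergeometric identities.

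For the base of the induction I would handle separately the cases $b=0$, $c=0$, $i=a$, and $j=a$ (the cases $a=1$, $i=1$, $j=1$ follow from these). When $b=0$ or $c=0$ the hexagon collapses enough that a strip of forced lozenges reduces the problem either to MacMahon's formula~\eqref{MacM_eq} or to a trivially tileable region. When the dent sits at the end of its side ($i=a$ or $j=a$), a wedge of lozenges is forced, after which the remaining region is of the type enumerated by Proposition~\ref{gk_regions_prop} or Proposition~\ref{CLP}; substituting $i=a$ or $j=a$ into the proposed ${}_4F_3$ formula should cancel enough parameters to convert it into a summable ${}_3F_2$ whose value, via Chu--Vandermonde, matches the known expression.

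For the inductive step I would apply Theorem~\ref{kuo1} with the four vertices $w,x,y,z$ placed on a face of the hexagon that avoids both dents, e.g.\ adjacent to one of the corners between a side of length $b$ and a side of length $c$. After interpreting each of the five region counts produced by the deletions (erasing the lozenges forced because the removed unit triangles sit on the boundary), Kuo's identity becomes a bilinear recurrence of the schematic form
$$
\opp(a,b,c)_{i,j}\,\opp(a',b',c')_{i',j'} \;=\; \opp(\cdot)\,\opp(\cdot) + \opp(\cdot)\,\opp(\cdot),
$$
in which all six arguments on both sides have strictly smaller half-perimeter than $\opp(a,b,c)_{i,j}$. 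Exhibiting an explicit canonical lozenge tiling of the region indexed by $(a',b',c',i',j')$, analogous to the one drawn on the right in Figure~\ref{opposite_fig}, shows that the normalizing factor $\opp(a',b',c')_{i',j'}$ is nonzero, so after dividing through the recurrence determines $\opp(a,b,c)_{i,j}$ from strictly smaller cases.

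The main obstacle is showing that the proposed product--${}_4F_3$ expression satisfies this bilinear recurrence. As in the proof of Proposition~\ref{adjacent_prop}, substitution produces an identity between six hypergeometric series whose parameters differ from one another by small integer shifts. The strategy is to apply a sequence of contiguous relations (the ${}_4F_3$-analogues of \texttt{C40}, \texttt{C42}, \texttt{C54}, \texttt{C55} from Krattenthaler's HYP documentation) to rewrite all six series in terms of a single distinguished ${}_4F_3$, with a number of intermediate reductions producing ${}_3F_2$'s in which an upper and a lower parameter cancel, so that Chu--Vandermonde summation~\eqref{chuvandermonde} evaluates them in closed form. The identity then collapses to the assertion that a polynomial of degree at most two in the distinguished series has vanishing coefficients, which is a routine (if lengthy) rational-function identity checkable with HYP support. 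The extra pair of parameters over the ${}_3F_2$ setting of Proposition~\ref{adjacent_prop} makes the bookkeeping substantially heavier and choosing the precise sequence of contiguous relations that leads to a single-series expression will be the technical heart of the argument.
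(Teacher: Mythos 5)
Your overall strategy (induction plus Kuo condensation plus contiguous relations) is the same as the paper's, but two concrete steps in your plan would fail as stated. First, the choice of Kuo identity: you invoke Theorem~\ref{kuo1}, whose hypothesis is $w,y\in V_1$, $x,z\in V_2$ and whose right-hand side is a \emph{sum} of two products. For two dents on \emph{opposite} sides of length $a$, the four boundary triangles one must remove to stay inside the $\opp$ family come out with $w,x$ in one colour class and $y,z$ in the other, so the applicable identity is the other version of Kuo's theorem (Theorem~\ref{kuo2}), whose right-hand side is a \emph{difference}. The paper's recurrence is
\begin{multline*}
\opp(a,b,c)_{i,j}\, \opp(a-2,b,c)_{i-1,j-1} \\ =
\opp(a-1,b,c)_{i-1,j-1}\, \opp(a-1,b,c)_{i,j} -
\opp(a-1,b-1,c+1)_{i,j-1}\, \opp(a-1,b+1,c-1)_{i-1,j},
\end{multline*}
with a minus sign and with the dent positions shifting; your schematic plus-sign recurrence is not of this form, and the subsequent hypergeometric verification depends on which recurrence one actually has. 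Second, the summation theorem: after a contiguous relation cancels an upper against a lower parameter, a ${}_4F_3$ degenerates to a terminating \emph{balanced} ${}_3F_2$ at $1$, which is evaluated by the Pfaff--Saalsch\"utz summation \eqref{pfaff}, not by Chu--Vandermonde \eqref{chuvandermonde} (the latter only handles ${}_2F_1$'s, which is why it sufficed in the adjacent-sides case). The relations actually used are {\tt C30}, {\tt C55} and {\tt C57} rather than analogues of {\tt C40}/{\tt C42}/{\tt C54}.

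A smaller discrepancy: the paper inducts on $a$ alone (the recurrence above lowers $a$ by $1$ or $2$ in every factor), with base cases $a=1,2$ and the boundary cases $i\in\{1,a\}$, $j\in\{1,a\}$ handled via symmetry and the specializations to Proposition~\ref{gk_regions_prop} (with $k=0$) or Proposition~\ref{CLP}. Your base-case list ($b=0$, $c=0$) is imported from the adjacent-sides proof and is not what this recurrence requires; note also that the statement assumes $b,c\ge 1$. Your idea of exhibiting a canonical tiling to show the normalizing factor $\opp(a-2,b,c)_{i-1,j-1}$ is nonzero is exactly what the paper does.
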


\begin{figure}[ht]
\begin{minipage}[b]{0.49\linewidth}
\centering
\scalebox{0.6}{\includegraphics[width=\textwidth]{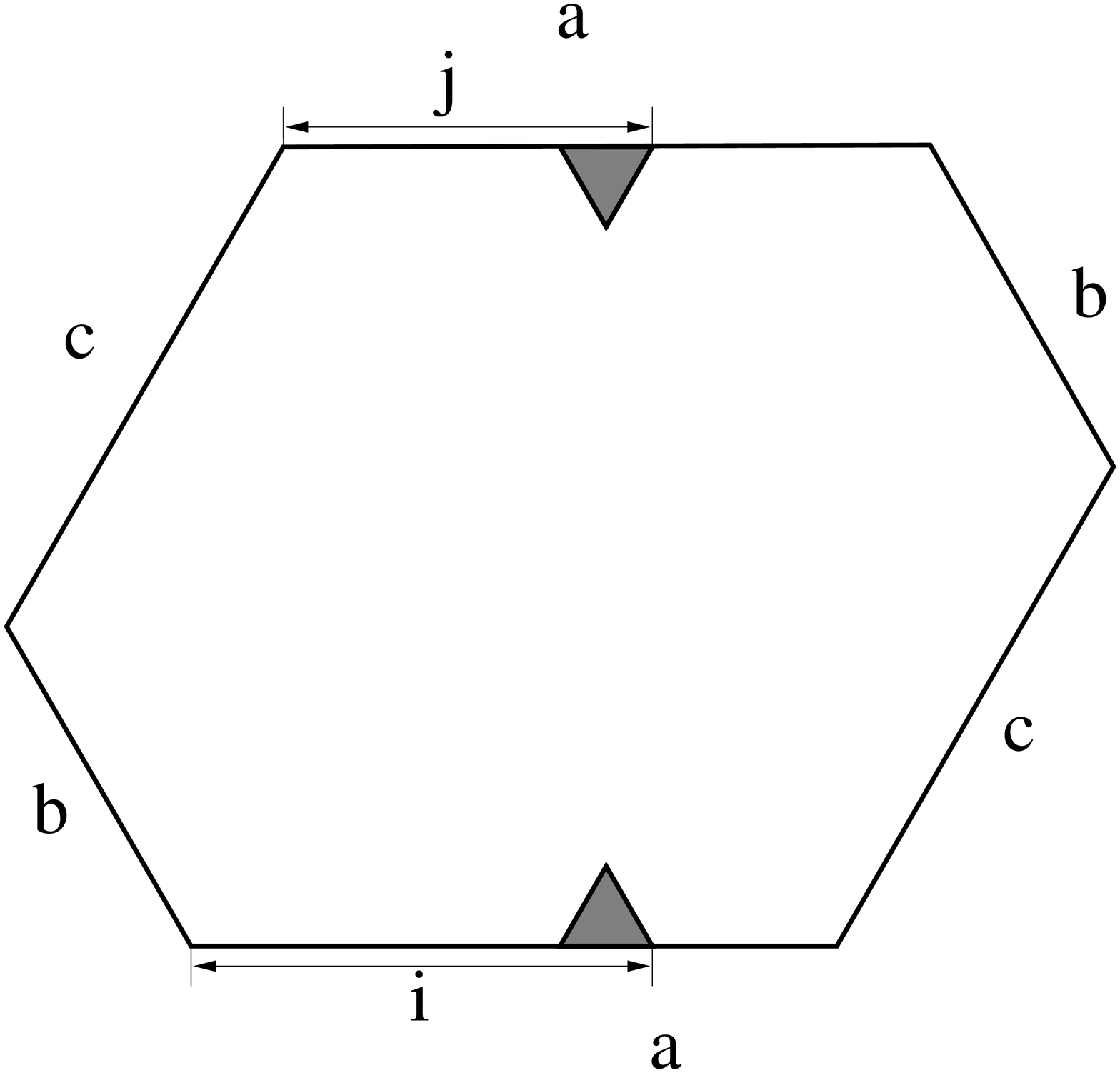}}
\end{minipage}
\begin{minipage}[b]{0.49\linewidth}
\centering
\scalebox{0.6}{\includegraphics[width=\textwidth]{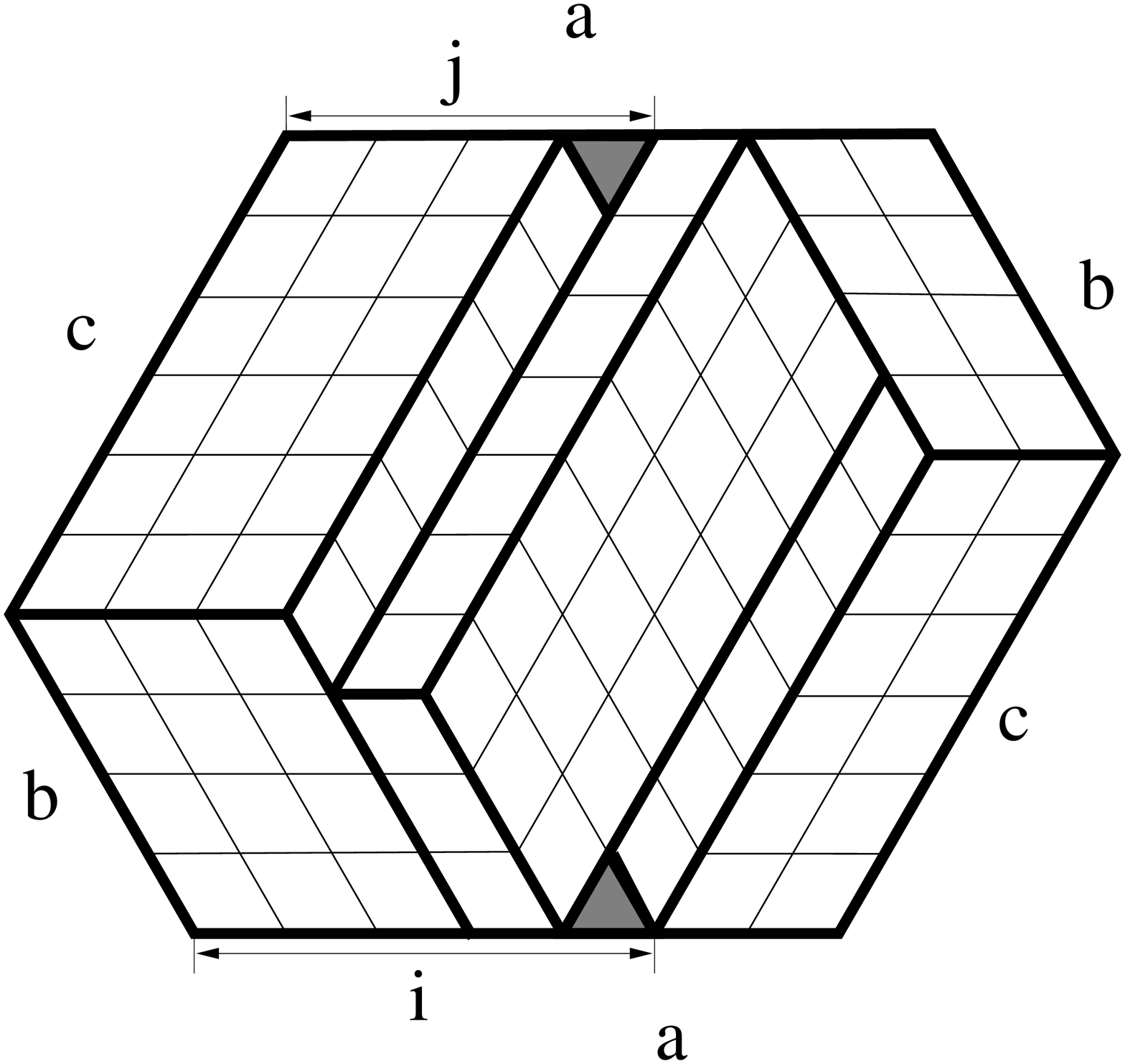}}
\end{minipage}
\caption{\label{opposite_fig} The region in Proposition~\ref{opposite_prop} and a canonical lozenge tiling.}
\end{figure}

Again the main ingredient for the proof of the proposition is Kuo's condensation. The following version will be applied.

\begin{theo} \cite[Theorem 2.3]{Kuo}  
\label{kuo2}
Let $G=(V_1,V_2,E)$ be a plane bipartite graph and $w,x,y,z$ vertices of $G$ that appear in cyclic order on a face of $G$. If $w,x \in V_1$ and $y,z \in V_2$ then
$$
\M(G) \M(G - \{w,x,y,z\}) =
\M(G - \{w,z\}) \M(G - \{x,y\}) - \M(G - \{w,y\}) 
\M(G - \{x,z\}).
$$
\end{theo}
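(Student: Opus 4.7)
The plan is to mirror the strategy used in the proof of Proposition~\ref{adjacent_prop}: induct on $a+b+c$, derive a bilinear recursion via Kuo's condensation (this time the variant given in Theorem~\ref{kuo2}), and then verify that the claimed ${}_4F_3$-expression satisfies that recursion by means of contiguous relations for hypergeometric series.

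First I would dispose of the base cases. The most convenient ones are $b=0$ (where the region degenerates to a smaller hexagon, handled by \eqref{MacM_eq} or by Proposition~\ref{CLP} after removing forced lozenges) together with the extreme dent positions $i=a$ or $j=a$, which force a row of lozenges along a short side and reduce the problem to a hexagon with a single dent, enumerated by Proposition~\ref{gk_regions_prop} (with $k=0$) or by Proposition~\ref{CLP}. I would also exhibit, as suggested on the right of Figure~\ref{opposite_fig}, a canonical lozenge tiling of the region for every admissible choice of parameters; this certifies that the count appearing as the divisor in the recursion is nonzero, so that the induction step is legitimate.

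For the induction step, assume $a,b,c\ge 1$ and both dents are in interior positions, and write $\opp(a,b,c)_{i,j}$ for the quantity in the proposition. I would apply Theorem~\ref{kuo2} to the planar dual of the region, choosing the four boundary vertices $w,x,y,z$ near two opposite corners in a way that respects the bipartition $w,x\in V_1$ and $y,z\in V_2$. After peeling off forced lozenges, each of the five smaller regions $G-\{w,x,y,z\}$, $G-\{w,z\}$, $G-\{x,y\}$, $G-\{w,y\}$, $G-\{x,z\}$ should reduce to another ``two opposite dents'' region, with parameters $(a,b,c,i,j)$ shifted by one and the dent positions $(i,j)$ preserved or symmetrically interchanged. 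This yields a bilinear identity of the schematic form
\[
\opp(a,b,c)_{i,j}\,\opp(a-1,b,c-1)_{\cdots} = \opp(\cdots)\,\opp(\cdots) - \opp(\cdots)\,\opp(\cdots),
\]
all five factors on the right being instances of the same enumerator with smaller total side length, so the inductive hypothesis supplies their closed forms. Identifying each shifted region geometrically is a routine but careful check of which corners are occupied by forced lozenges.

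The core of the argument, and the step I expect to be the main obstacle, is verifying that the conjectured closed form satisfies this recursion. After substitution and clearing of common Pochhammer factors, the identity becomes a relation among six ${}_4F_3$-series whose upper and lower parameters differ by small integers, each multiplied by an explicit rational factor of Pochhammer symbols. Following the template of Proposition~\ref{adjacent_prop}, I would apply a carefully chosen sequence of contiguous relations of the types {\tt C40}, {\tt C42}, {\tt C54}, {\tt C55} (in their ${}_4F_3$ versions, as tabulated in Krattenthaler's HYP documentation) to reduce every occurrence to a single common ${}_4F_3$. Whenever a contiguous step forces a cancellation of an upper parameter with a lower parameter, the series collapses to a ${}_3F_2$ or ${}_2F_1$ that can be summed in closed form, in many cases by Chu--Vandermonde \eqref{chuvandermonde} applied in the variable $i$ or $j$, which is essentially a non-negative integer shift away from an $a$-parameter. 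After sufficiently many such reductions the identity becomes a polynomial of degree at most two in the remaining ${}_4F_3$; it then suffices to check that its three Pochhammer-rational coefficients vanish identically, which is tedious but mechanical with the help of HYP. The delicate part will be to choose an ordering of contiguous moves that simultaneously drives all six series to the same common ${}_4F_3$ without spawning unwanted auxiliary series along the way; the four-numerator setting offers more freedom than the ${}_3F_2$ case, but also more ways for the reductions to fail to align.
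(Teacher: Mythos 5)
Your proposal does not address the statement it is supposed to prove. Theorem~\ref{kuo2} is a purely graph-theoretic identity about perfect matchings of an arbitrary plane bipartite graph $G$ with four marked vertices $w,x,y,z$ on a common face (Kuo's condensation, quoted in the paper from \cite[Theorem 2.3]{Kuo} without proof). What you have written instead is a proof sketch of Proposition~\ref{opposite_prop} --- the enumeration of lozenge tilings of a hexagon with two dents on opposite sides --- and, worse, your sketch \emph{uses} Theorem~\ref{kuo2} as its main tool. As an argument for Theorem~\ref{kuo2} itself this is circular and, more to the point, simply off target: nothing about hexagons, forced lozenges, ${}_4F_3$-series, or contiguous relations has any bearing on the claimed identity, which must hold for every plane bipartite graph, not just duals of dented hexagons.

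A genuine proof of Theorem~\ref{kuo2} runs along entirely different lines. The standard approach is to superimpose a perfect matching of $G$ with one of $G-\{w,x,y,z\}$ (for the left-hand side) or matchings of the two complementary pairs (for the right-hand side); the union decomposes into doubled edges, alternating cycles, and alternating paths whose endpoints are among $w,x,y,z$. Because $w,x$ lie in $V_1$ and $y,z$ in $V_2$, the parity of the vertex classes forces each path to join a vertex of $V_1$ to one of $V_2$, and planarity (the four vertices lying in cyclic order on a face) forbids the pairing $\{w,x\}$ with $\{y,z\}$ crossing in certain ways; a case analysis of which of the pairings $\{w,z\},\{x,y\}$ versus $\{w,y\},\{x,z\}$ occurs, together with a swapping bijection on the path systems, yields the identity with the minus sign. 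None of this machinery appears in your proposal, so the statement remains unproved.
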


Also in the proof of Proposition~\ref{opposite_prop} we need to apply some contiguous relations. Specifically, these are 
\begin{align*}
\pFq{r}{s}{x,y,(A)}{(B)}{z} &\stackrel{{\tt C30}[x,y]}{\rightarrow} \pFq{r}{s}{x-1,y+1,(A)}{(B)}{z} 
\\ & \qquad \qquad + (1-x+y) \frac{\prod_{i=1}^{r-2} A_i }{\prod_{i=1}^{s} B_i} \pFq{r}{s}{x,y+1(A+1)}{(B+1)}{z} \\
\pFq{r}{s}{w,(A)}{x,y,(B)}{z} &\stackrel{{\tt C57}[w,x,y]}{\rightarrow} \frac{(x-1)(y-w)}{(w-1)(y-x)} \pFq{r}{s}{w-1,(A)}{x-1,y,(B)}{z} \\
& \qquad \qquad + \frac{(x-w)(y-1)}{(w-1)(x-y)} \pFq{r}{s}{w-1,(A)}{x,y-1(B)}{z} 
\end{align*}
and ${\tt C55}[x,y]$ which was also used in the proof of Proposition~\ref{adjacent_prop} and introduced there. The role of the Chu-Vandermonde 
summation is now taken over by the Pfaff-Saalsch\"utz summation.
\begin{equation}
\label{pfaff}
\pFq{3}{2}{w,x,-n}{y}{1} \stackrel{{\tt S3201}}{\rightarrow} 
\frac{(y-w)_n (y-x)_n}{(y)_n (y-w-x)_n}
\end{equation}
Again $n$ has to be a non-negative integer.

\begin{proof}
We use induction with respect to $a$. The base cases of the induction are 
$a=1,2$. For our argument we also need to check that cases $i=1$, $j=1$, $i=a$ and $j=a$ individually. 

Since $a=1,2$ implies $i=1$ or $i=2$, we do not have to consider the cases $a=1,2$
once the other cases mentioned have been considered.
By the symmetry between $i$ and $j$, we do also not have to consider that cases  $j=1$ and $j=a$. Moreover, the case $i=a$ 
is equivalent to the case $i=1$ and so it suffices to consider the case $i=1$. This case 
is the special case $k=0$ in Proposition~\ref{gk_regions_prop} or a special case of Proposition~\ref{CLP}.

\begin{figure}
\scalebox{0.25}{\includegraphics{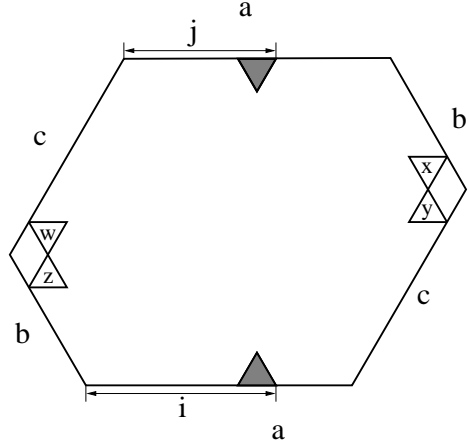}}
\caption{\label{oppositekuo_fig} The special vertices $w,x,y,z$ in Kuo's condensation in Proposition~\ref{opposite_prop}.}
\end{figure}

In the rest of the proof, we may assume $a \ge 3$ and 
$1 < i, j < a$ and let $\opp(a,b,c)_{i,j}$ denote the number of lozenge tilings of the region that is the subject of this proposition. Using Theorem~\ref{kuo2} with the vertices as indicated in Figure~\ref{oppositekuo_fig}, left, we obtain the following identity.
\begin{multline}
\label{rec2}
\opp(a,b,c)_{i,j} \opp(a-2,b,c)_{i-1,j-1} \\ = 
\opp(a-1,b,c)_{i-1,j-1} \opp(a-1,b,c)_{i,j} - 
\opp(a-1,b-1,c+1)_{i,j-1} \opp(a-1,b+1,c-1)_{i-1,j}
\end{multline}

In Figure~\ref{opposite_fig}, right, we indicate how to construct a lozenge tiling for any choice of parameters as described in the statement of the proposition. This implies $\opp(a-2,b,c)_{i-1,j-1} \not=0$ under our assumptions and so 
\eqref{rec2} provides a recursion for $\opp(a,b,c)_{i,j}$ with respect to $a$.

It remains to show that the expression in the statement of the lemma fulfills 
\eqref{rec2}. We move all terms in 
\eqref{rec2} to one side, plug in the expression for $\opp(a,b,c)_{j,k}$ and obtain an identity of the following structure:
\begin{multline}
\label{six2}
 - \pFq{4}{3}{1-i,1-j,1-c-j,a+b-j}{2-c-j,1+b-j,1+a-i-j}{1}   \pFq{4}{3}{2-i,2-j,2-c-j,1+a+b-j}{3-c-j,2+b-j,3+a-i-j}{1}  \fpp_1\\
 + \pFq{4}{3}{1-i,1-j,1-c-j,1+a+b-j}{2-c-j,1+b-j,2+a-i-j}{1}  \pFq{4}{3}{2-i,2-j,2-c-j,a+b-j}{3-c-j,2+b-j,2+a-i-j}{1}  \fpp_2 \\
 +\pFq{4}{3}{1-i,2-j,1-c-j,a+b-j}{2-c-j,1+b-j,2+a-i-j}{1} \pFq{4}{3}{2-i,1-j,2-c-j,1+a+b-j}{3-c-j,2+b-j,2+a-i-j}{1}  \fpp_3 \\ = 0
\end{multline}
Again, $\fpp_i$, $i=1,2,3$, stands for certain fractions of products of Pochhammer functions. 

We apply ${\tt C57}[1+a+b-j,2-c-j,2+a-i-j]$ to
$$
\pFq{4}{3}{1-i,1-j,1-c-j,1+a+b-j}{2-c-j,1+b-j,2+a-i-j}{1}
$$
and obtain 
\begin{multline*}
 \frac{(1-b-i)(1-c-j)}{(a+c-i) (a+b-j)} \pFq{3}{2}{1-i,1-j,a+b-j}{1+b-j,2+a-i-j}{1}  \\
 + \frac{(1-a-b-c)(1+a-i-j)}{(-a-c+i)(a+b-j)} 
 \pFq{4}{3}{1-i,1-j,1-c-j,a+b-j}{2-c-j,1+b-j,1+a-i-j}{1}.
\end{multline*}
Note that there is a cancellation in the first $_{4} F_{3}$-series and so we can apply Pfaff-Saalsch\"utz summation \eqref{pfaff}. Moreover observe that the  
$_{4} F_{3}$-series appears also elsewhere in the expression and so we have reduced the number of different $_{4} F_{3}$-series from six to five. Finding reductions of this type is our strategy to prove \eqref{six2}.

Next we apply ${\tt C57}[1+a+b-j,3-c-j,3+a-i-j]$ to 
$$
\pFq{4}{3}{2-i,2-j,2-c-j,1+a+b-j}{3-c-j,2+b-j,3+a-i-j}{1}
$$
and once again obtain an $_{3} F_{2}$-series to which we can apply Pfaff-Saalsch\"utz summation. The other series is
$$
\pFq{4}{3}{2-i,2-j,2-c-j,a+b-j}{3-c-j,2+b-j,2+a-i-j}{1}.
$$

Now we apply ${\tt C57}[2-j,2-c-j,2+a-i-j]$ to 
$$
\pFq{4}{3}{1-i,2-j,1-c-j,a+b-j}{2-c-j,1+b-j,2+a-i-j}{1}
$$
and obtain an expression containing 
$$
\pFq{4}{3}{1-i,1-j,1-c-j,a+b-j}{2-c-j,1+b-j,1+a-i-j}{1}
$$
as well as an $_{3} F_{2}$-series to which we can apply Pfaff-Saalsch\"utz summation.

We apply ${\tt C55}[2-j,a+b-j,3-c-j]$ to the two copies of 
$$
\pFq{4}{3}{2-i,2-j,2-c-j,a+b-j}{3-c-j,2+b-j,2+a-i-j}{1}.
$$
and obtain an expression containing only the following two $_{4} F_{3}$-series:
$$
\pFq{4}{3}{2-i,1-j,2-c-j,1+a+b-j}{3-c-j,2+b-j,2+a-i-j}{1} \quad \text{and} \quad 
\pFq{4}{3}{1-i,1-j,1-c-j,a+b-j}{2-c-j,1+b-j,1+a-i-j}{1}
$$
Finally we apply ${\tt C30}[1-j,1-c-j]$ to the second series and obtain an expression that is a polynomial in the first series of degree at most $2$. However, it turns out that the coefficients of this polynomial vanish.
\end{proof}


\section{Proof of the main results}
\label{proofmainresult}


Our proof of Theorem~\ref{main_thm} is based on the first author's extension \cite{gk} of Kuo's graphical condensation method. For convenience we include it below.

A {\it weighted} graph is a graph with weights (that could be considered indeterminates) on its edges. For a weighted graph $G$, $\M(G)$ denotes the sum of the weights of the perfect matchings of $G$, where the weight of a perfect matching is taken to be the product of the weights of its constituent edges (note that if all edges have weight 1, this becomes simply the number of perfect matchings of the graph).

\begin{theo} \cite[Theorem 2.1]{gk}
\label{gk_thm}
Let $G$ be a planar graph with the vertices $\al_1,\dotsc,\al_{2k}$ appearing in that cyclic order on a face of $G$. Consider the skew-symmetric matrix $A=(a_{ij})_{1\leq i,j\leq 2k}$ with entries given by
\begin{equation}
a_{ij}:= \begin{cases} 
\phantom{-}\M(G\setminus\{\al_i,\al_j\}),  &\text{\rm if $i<j$}\\
-\M(G\setminus\{\al_i,\al_j\}),  &\text{\rm if $i>j$}.
\end{cases}
\end{equation}

Then we have that
\begin{equation}
\label{Pf_formula}
\M(G\setminus\{\al_1,\dotsc,\al_{2k}\})
=
\frac{\Pf(A)}{\left[\M(G)\right]^{k-1}}.
\end{equation}

\end{theo}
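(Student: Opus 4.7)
The plan is to prove the formula by induction on $k$, with Kuo's condensation (Theorem~\ref{kuo1} or \ref{kuo2}) providing both the base case and the driving step of the induction, and the classical Pfaffian Desnanot--Jacobi identity doing the algebraic legwork.

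The base case $k=1$ is a tautology. The case $k=2$ is Kuo's condensation rewritten as a Pfaffian identity: in the $4\times 4$ skew-symmetric matrix $A$, two of the six off-diagonal entries $a_{ij}$ vanish automatically by the bipartite parity of the face (namely those for which $\al_i,\al_j$ lie in the same color class), so the three-term Pfaffian $a_{12}a_{34}-a_{13}a_{24}+a_{14}a_{23}$ collapses to the two-term right-hand side of either Theorem~\ref{kuo1} or Theorem~\ref{kuo2}, according as the cyclic color pattern of $\al_1,\dotsc,\al_4$ around the face is alternating or grouped.

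For the inductive step with $k\ge 3$, I would set $G':=G\setminus\{\al_5,\dotsc,\al_{2k}\}$. The vertices $\al_1,\al_2,\al_3,\al_4$ still appear in cyclic order on a face of $G'$, so Kuo's condensation applied to $G'$ yields, in Pfaffian form,
\[
\M(G')\,\M(G\setminus\{\al_1,\dotsc,\al_{2k}\})=b_{12}b_{34}-b_{13}b_{24}+b_{14}b_{23},
\]
where $b_{ij}:=\M(G\setminus\{\al_i,\al_j,\al_5,\dotsc,\al_{2k}\})$. Applying the induction hypothesis at $k'=k-2$ (to $G$ with the $2k-4$ marked vertices $\al_5,\dotsc,\al_{2k}$) and at $k'=k-1$ (to $G$ with the $2k-2$ marked vertices $\al_i,\al_j,\al_5,\dotsc,\al_{2k}$, for each pair $\{i,j\}\subset\{1,2,3,4\}$) gives
\[
\M(G')=\frac{\Pf(A_{\hat{1}\hat{2}\hat{3}\hat{4}})}{[\M(G)]^{k-3}},\qquad b_{ij}=\frac{\Pf(A_{\hat{l}\hat{m}})}{[\M(G)]^{k-2}},
\]
where $\{l,m\}=\{1,2,3,4\}\setminus\{i,j\}$ and the hats denote deletion of the corresponding rows and columns of $A$. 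Substituting these expressions and clearing the powers of $\M(G)$ reduces the task to the purely algebraic identity
\[
\Pf(A)\cdot\Pf(A_{\hat{1}\hat{2}\hat{3}\hat{4}})=\Pf(A_{\hat{1}\hat{2}})\Pf(A_{\hat{3}\hat{4}})-\Pf(A_{\hat{1}\hat{3}})\Pf(A_{\hat{2}\hat{4}})+\Pf(A_{\hat{1}\hat{4}})\Pf(A_{\hat{2}\hat{3}}),
\]
which is the classical Pfaffian Desnanot--Jacobi (Tanner) identity for skew-symmetric matrices.

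The main obstacle will be the bookkeeping in the $k=2$ base case: one must verify, for each cyclic color pattern of $\al_1,\dotsc,\al_4$ on the face, that Kuo's two-term identity, combined with the vanishing of exactly two of the six entries $a_{ij}$, matches the three-term Pfaffian $a_{12}a_{34}-a_{13}a_{24}+a_{14}a_{23}$ in both magnitude and sign. The division by $\Pf(A_{\hat{1}\hat{2}\hat{3}\hat{4}})$ at the end of the inductive step is justified by reading the whole statement as a polynomial identity in the (possibly formal) edge weights, so that cases where this Pfaffian vanishes are handled by specialization.
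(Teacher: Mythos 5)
First, a point of comparison: the paper you are working from does not actually prove this statement --- Theorem~\ref{gk_thm} is quoted from the first author's earlier paper \cite{gk}, so there is no in-paper proof to measure against. That said, your architecture (a three-term ``Kuo-type'' identity as the $k=2$ base case, then induction on $k$ by peeling off four of the marked vertices and invoking the Pfaffian Desnanot--Jacobi/Tanner identity $\Pf(A)\Pf(A_{\hat{1}\hat{2}\hat{3}\hat{4}})=\Pf(A_{\hat{1}\hat{2}})\Pf(A_{\hat{3}\hat{4}})-\Pf(A_{\hat{1}\hat{3}})\Pf(A_{\hat{2}\hat{4}})+\Pf(A_{\hat{1}\hat{4}})\Pf(A_{\hat{2}\hat{3}})$) is essentially the skeleton of the proof in \cite{gk}, and your bookkeeping of the powers of $\M(G)$ in the inductive step is correct.

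The genuine gap is in your base case. The theorem is stated for an \emph{arbitrary} planar graph $G$, with no bipartiteness hypothesis, and the $k=2$ instance
\[
\M(G)\,\M(G\setminus\{\al_1,\al_2,\al_3,\al_4\})=a_{12}a_{34}-a_{13}a_{24}+a_{14}a_{23}
\]
is precisely the new content of \cite{gk}: for non-bipartite $G$ none of the six entries $a_{ij}$ need vanish, Theorems~\ref{kuo1} and~\ref{kuo2} do not apply, and the three-term identity must be proved from scratch (in \cite{gk} this is done by superposing a matching of $G$ with one of $G\setminus\{\al_1,\dotsc,\al_4\}$ and using planarity to control how the four resulting odd paths can pair up the $\al_i$). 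Your reduction to Kuo therefore proves the theorem only for bipartite $G$ --- which, to be fair, covers every application in the present paper, since all the relevant graphs are duals of lozenge regions; but even there you should dispose of the cyclic colour patterns that are not a $2$--$2$ split, where both sides vanish for parity reasons. A second, smaller gap is the division at the end of the inductive step: treating the edge weights as indeterminates only helps if $\Pf(A_{\hat{1}\hat{2}\hat{3}\hat{4}})=\M(G\setminus\{\al_5,\dotsc,\al_{2k}\})\,[\M(G)]^{k-3}$ is not \emph{identically} zero as a polynomial, i.e.\ if $G\setminus\{\al_5,\dotsc,\al_{2k}\}$ admits at least one perfect matching. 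If it admits none, your step yields only $0=0$ and ``specialization'' does not rescue it; you would need to show that some choice of four retained indices makes the complementary subgraph matchable, or to treat the fully degenerate configurations by a separate perturbation argument.
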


{\it Proof of Theorem~\ref{main_thm}.} Apply the Pfaffian formula provided by Theorem~\ref{gk_thm} to the planar dual graph of the region $\bar{H}^k_{a,b,c}$, and the vertices $\de_1,\dotsc,\de_{2n+2k}$ (recall that the latter are a listing in cyclic order of the vertices of the dual graph corresponding to the unit triangles in the set $\{\al_1,\dotsc,\al_{n+k}\}\cup\{\be_1,\dotsc,\be_n\}\cup\{\ce_1,\dotsc,\ce_k\}$; see Figure~\ref{fbb} and the footnote at the end of Section~2). 

Then the left hand side of equation (\ref{Pf_formula}) becomes precisely the left hand side of equation~(\ref{pfaff_expr}), and the right hand side of (\ref{Pf_formula}) becomes the expression on the right in (\ref{pfaff_expr}). To complete the proof of the theorem, we need to verify that the quantities on the right hand side of (\ref{pfaff_expr}) are given by explicit formulas as described by $(i)$--$(iv)$ in the statement of Theorem~\ref{main_thm}.

\begin{figure}[h]
\centerline{
\hfill
{\includegraphics[width=0.45\textwidth]{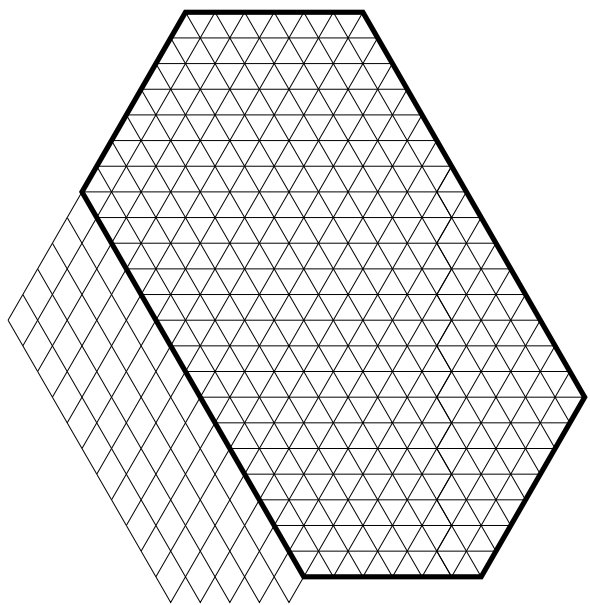}}
\hfill
}
\caption{\label{big_reg_forced} Removing the forced lozenges in $\bar{H}^5_{6,10,7}$.}
\end{figure}

Statement $(i)$ readily follows, noting that $k$ strips of lozenges along the southwestern side of $\bar{H}^k_{a,b,c}$ are forced to be part of all of its tilings (see Figure~\ref{big_reg_forced}). Upon their removal, one ends up with a centrally symmetric hexagon, whose number of tilings is given by MacMahon's formula~(\ref{MacM_eq}).

\begin{figure}[h]
\centerline{
\hfill
{\includegraphics[width=0.45\textwidth]{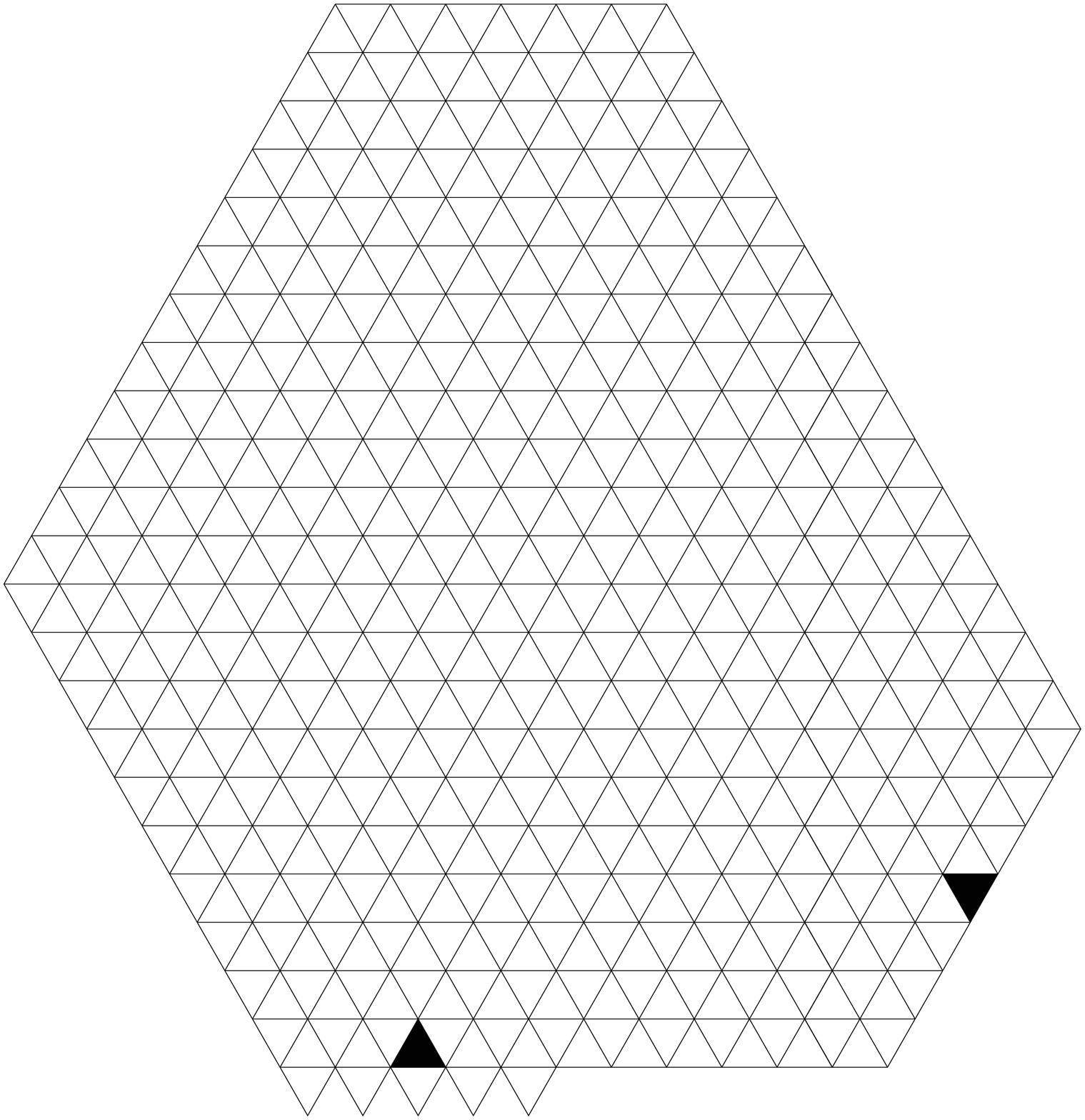}}
\hfill
{\includegraphics[width=0.45\textwidth]{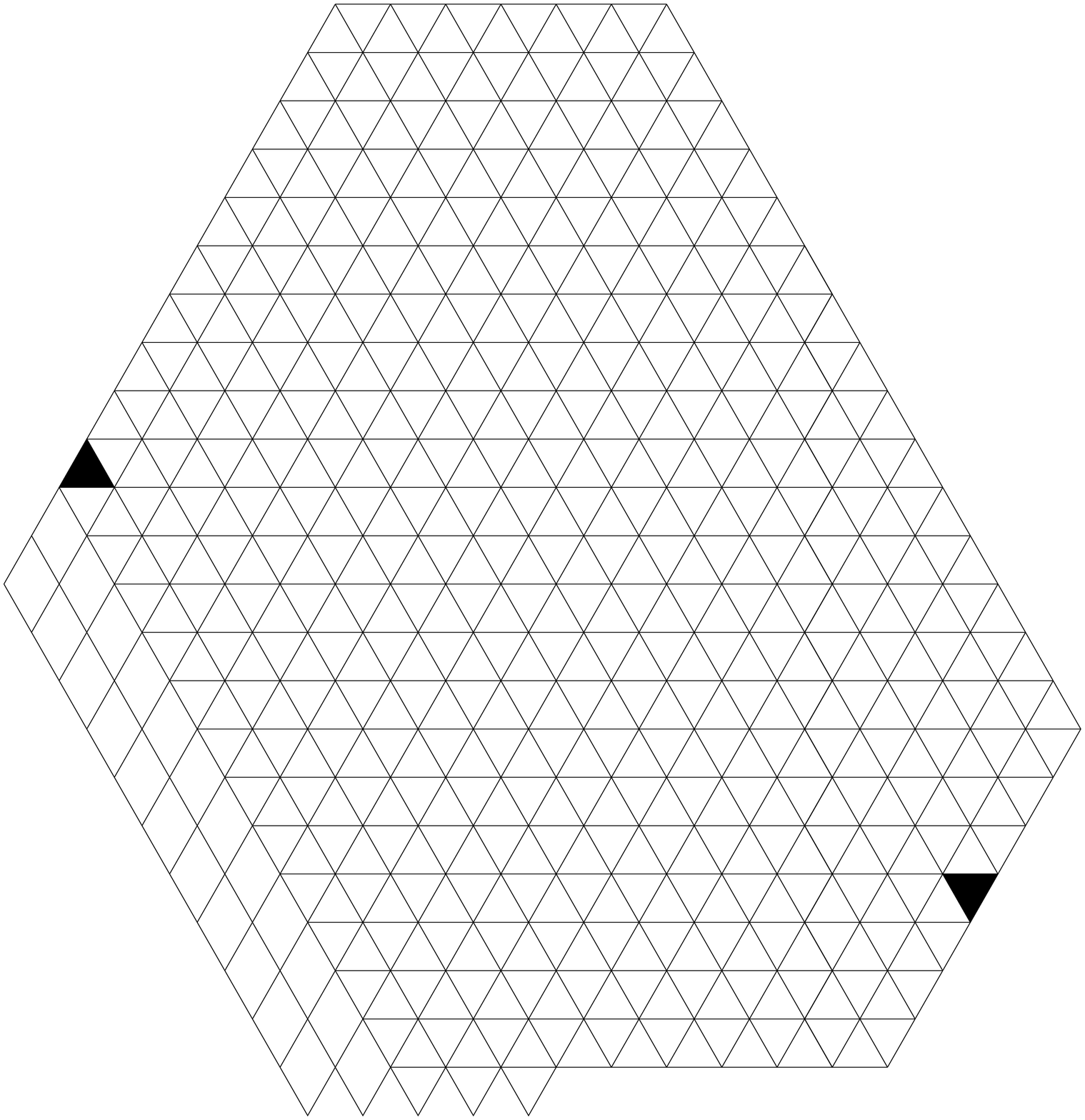}}
\hfill
}
\caption{\label{rem_ab_is0} Two choices of an $\al$-dent and a $\be$-dent in $\bar{H}^5_{6,10,7}$ that lead to regions with no tilings.}
\end{figure}

The two situations in the first part of statement $(ii)$ are illustrated in Figure~\ref{rem_ab_is0}. If $\al_i$ shares an edge with some $\ce_\nu$ (see the picture on the left in Figure~\ref{rem_ab_is0}), then $\ce_\nu$ cannot be covered by any lozenge in the region $\bar{H}^k_{a,b,c}\setminus\{\al_i,\be_j\}$, and hence $\M(\bar{H}^k_{a,b,c}\setminus\{\al_i,\be_j\})=0$. Similarly, if $\al_i$ is on the northwestern side, at a distance at most $k$ from the western corner (this situation is illustrated on the right in Figure~\ref{rem_ab_is0}), then the strips of forced lozenges along the southwestern side interfere with $\al_i$, and again there is no tiling.

Suppose therefore that $\al_i$ is in neither of the situations described in the first part of statement~$(ii)$. Then, due to the unit triangles $\ce_1,\dotsc,\ce_k$ on the bottom, there are $k$ strips of forced lozenges along the southwestern side of $\bar{H}^k_{a,b,c}$, as shown in Figure~\ref{big_reg_forced}, and $\al_i$ and $\be_i$ are dents on the boundary of the centrally symmetric hexagon left over after removing these forced lozenges. Since these two dents are unit triangles pointing in opposite directions, they must be either on adjacent or on opposite sides of the leftover hexagon, and statement $(ii)$ follows.

\begin{figure}[h]
\centerline{
\hfill
{\includegraphics[width=0.45\textwidth]{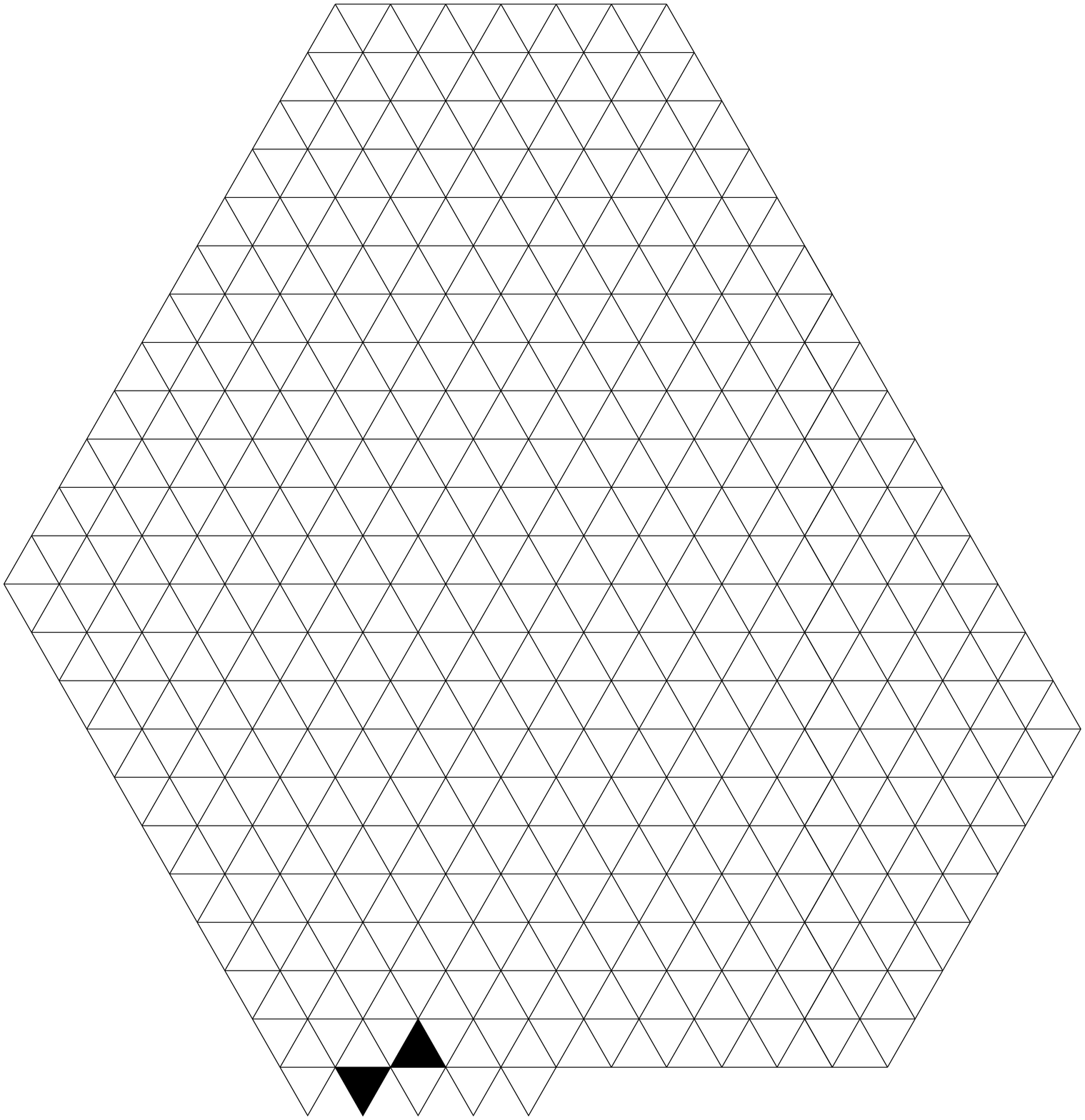}}
\hfill
{\includegraphics[width=0.45\textwidth]{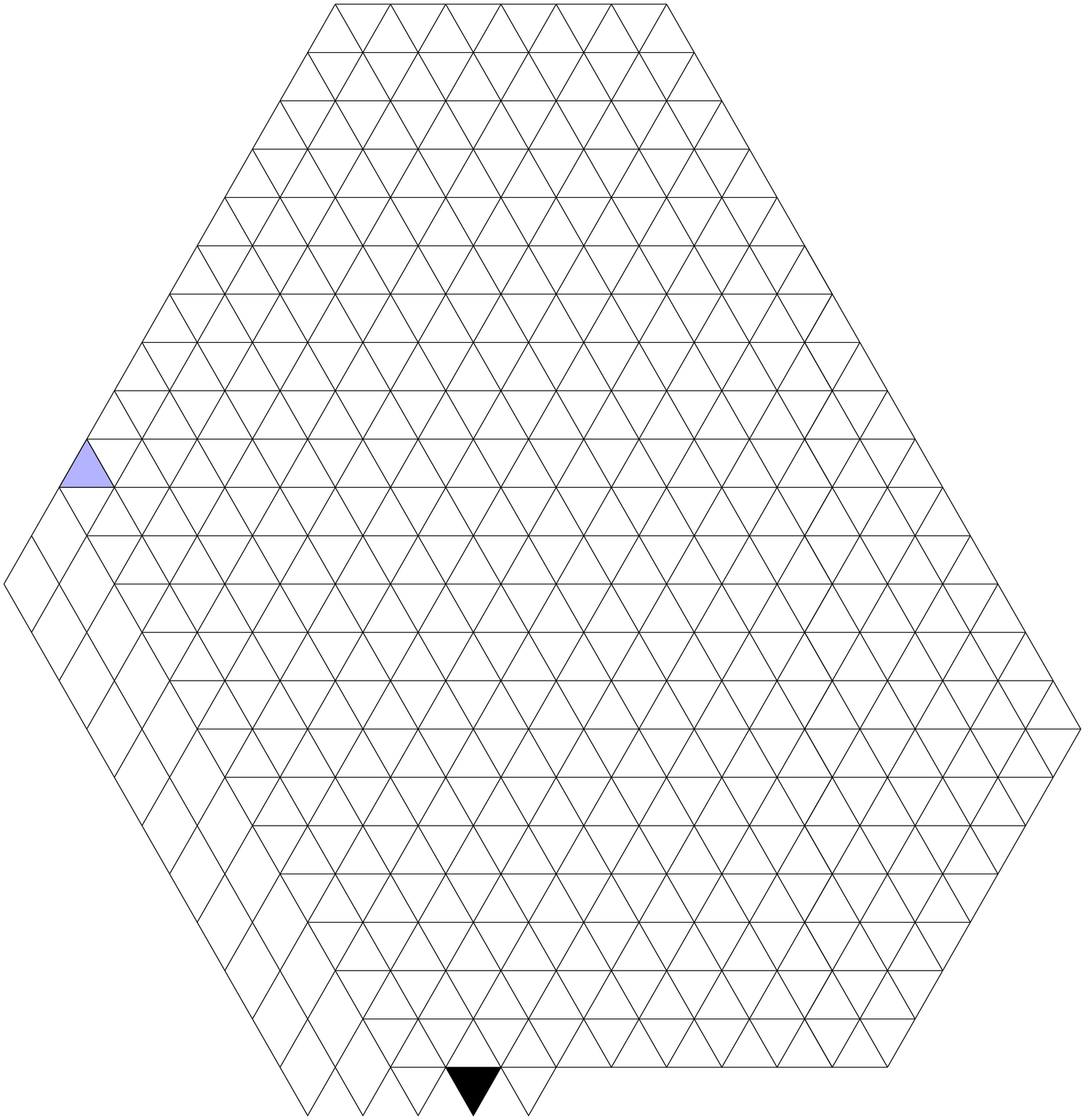}}
\hfill
}
\caption{\label{rem_ac_is0} Two choices of an $\al$-dent and a $\ce$-dent in $\bar{H}^5_{6,10,7}$ that lead to regions with no tilings.}
\end{figure}

The two situations described in the first part of statement $(iii)$ are illustrated in Figure~\ref{rem_ac_is0}. The resulting regions have no tilings for the same reasons as in the case of removing an $\al_i$ and a $\be_j$ discussed above.

\begin{figure}[h]
\centerline{
\hfill
{\includegraphics[width=0.33\textwidth]{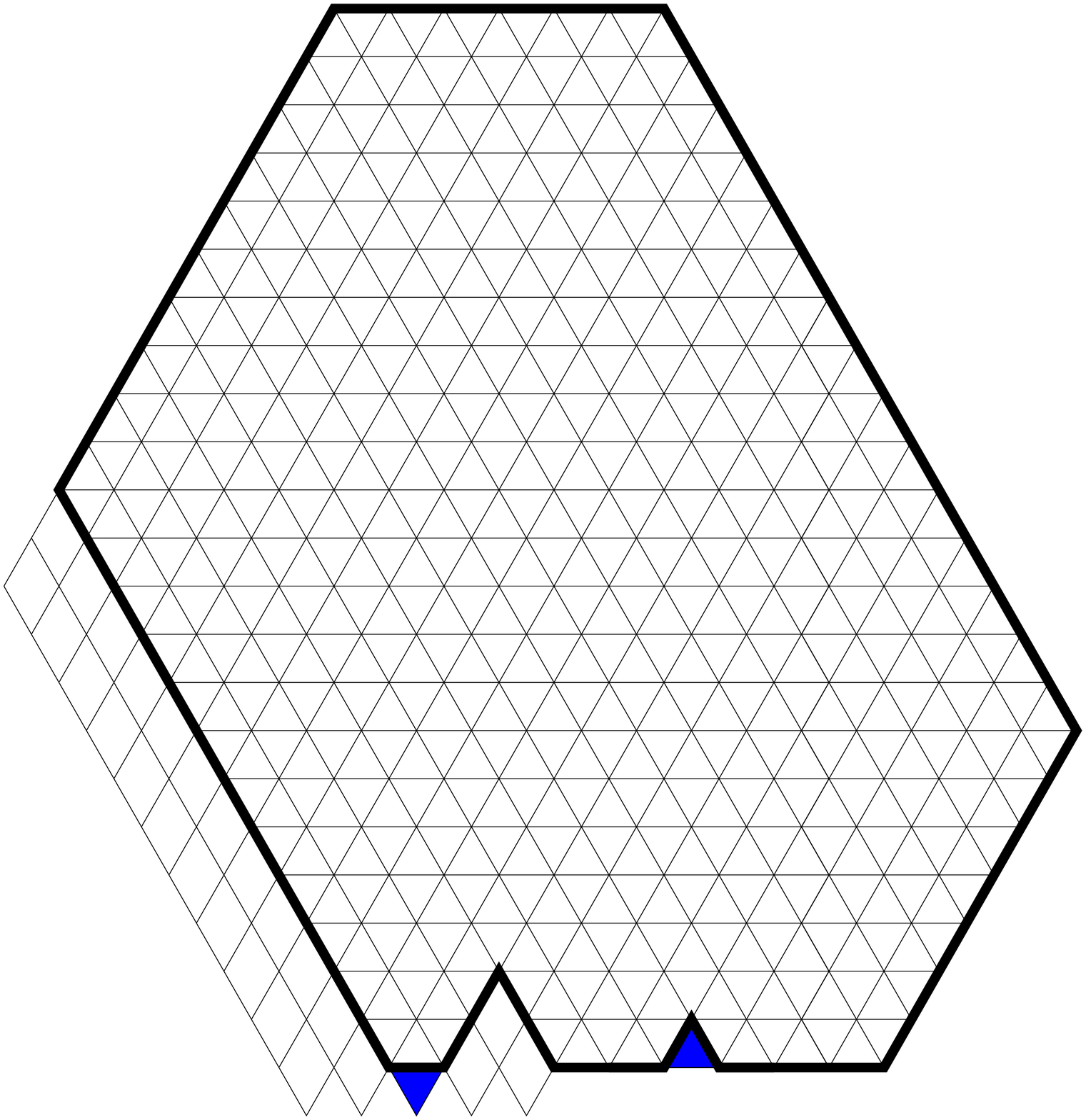}}
\hfill
{\includegraphics[width=0.33\textwidth]{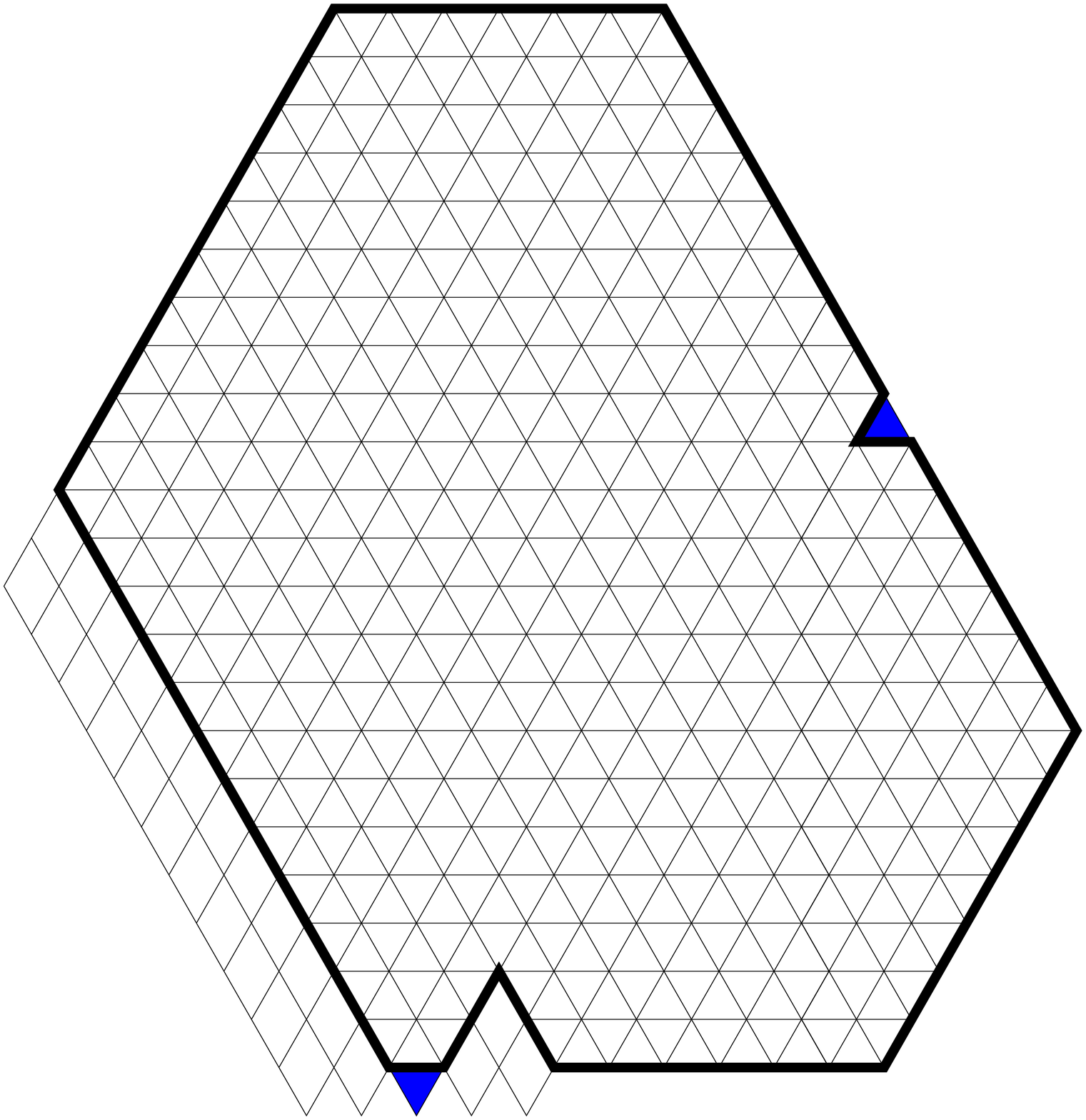}}
\hfill
{\includegraphics[width=0.33\textwidth]{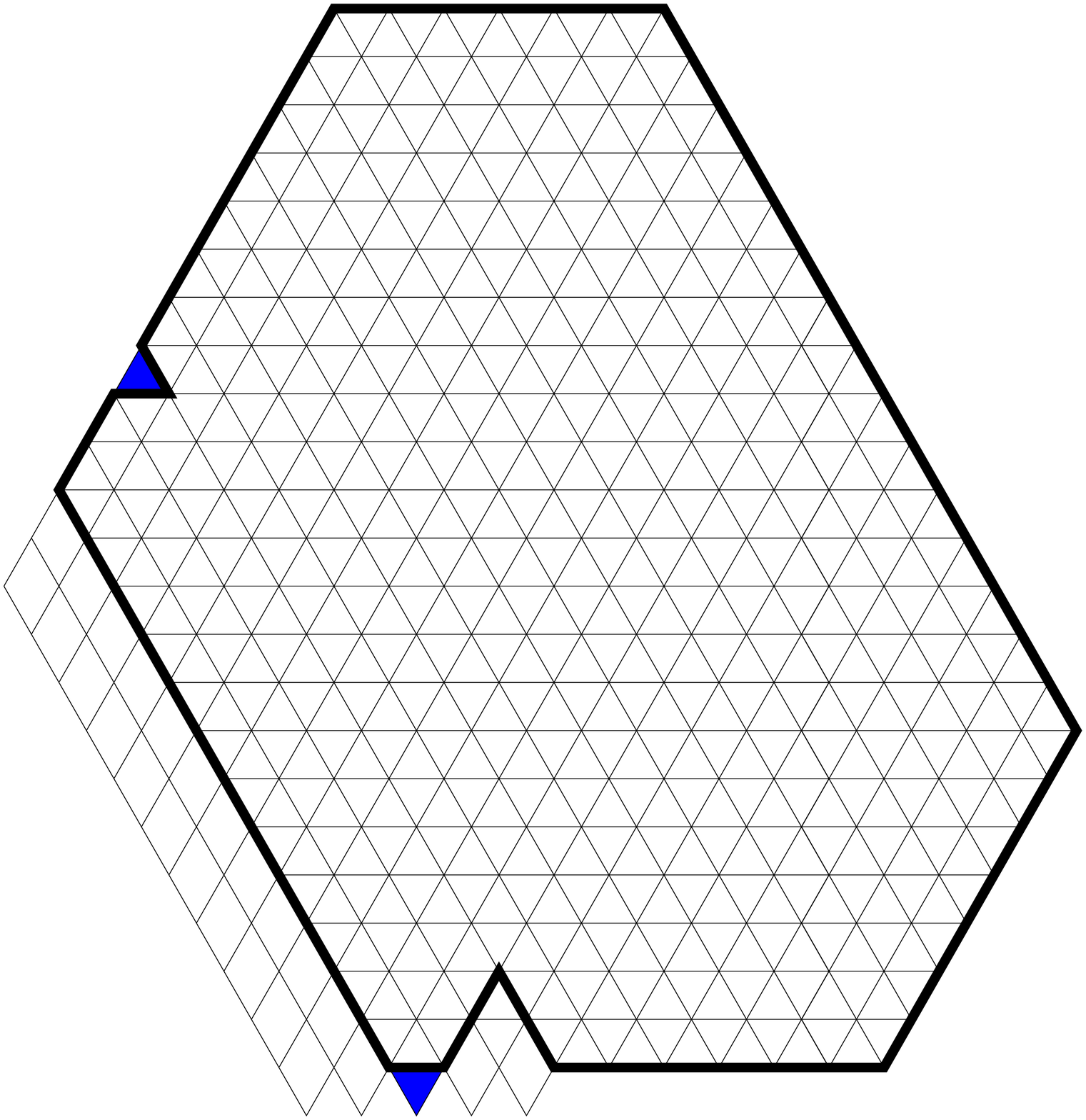}}
\hfill
}
\caption{\label{rem_ac_not0} The three types of choices of an $\al$-dent and a $\ce$-dent in $\bar{H}^5_{6,10,7}$ that lead to regions that have tilings.}
\end{figure}

If none of them applies, then the situation is one of the three described in Figure~\ref{rem_ac_not0}. In the first situation the region obtained after removing the forced lozenges is of the type covered by Proposition~\ref{CLP} (indeed, a dent of side $s$ is readily seen to be equivalent with a run of $s$ consecutive unit dents), while in the remaining two the resulting regions are precisely of the two kinds addressed by Proposition~\ref{gk_regions_prop}. This completes the verification of statement $(iii)$.

Statement $(iv)$ readily follows from the fact that a necessary condition for a region on the triangular lattice to have a lozenge tiling is to have the same number of up-pointing and down-pointing unit triangles. This completes the proof of the theorem. \epf

\begin{figure}[h]
\centerline{
\hfill
{\includegraphics[width=0.45\textwidth]{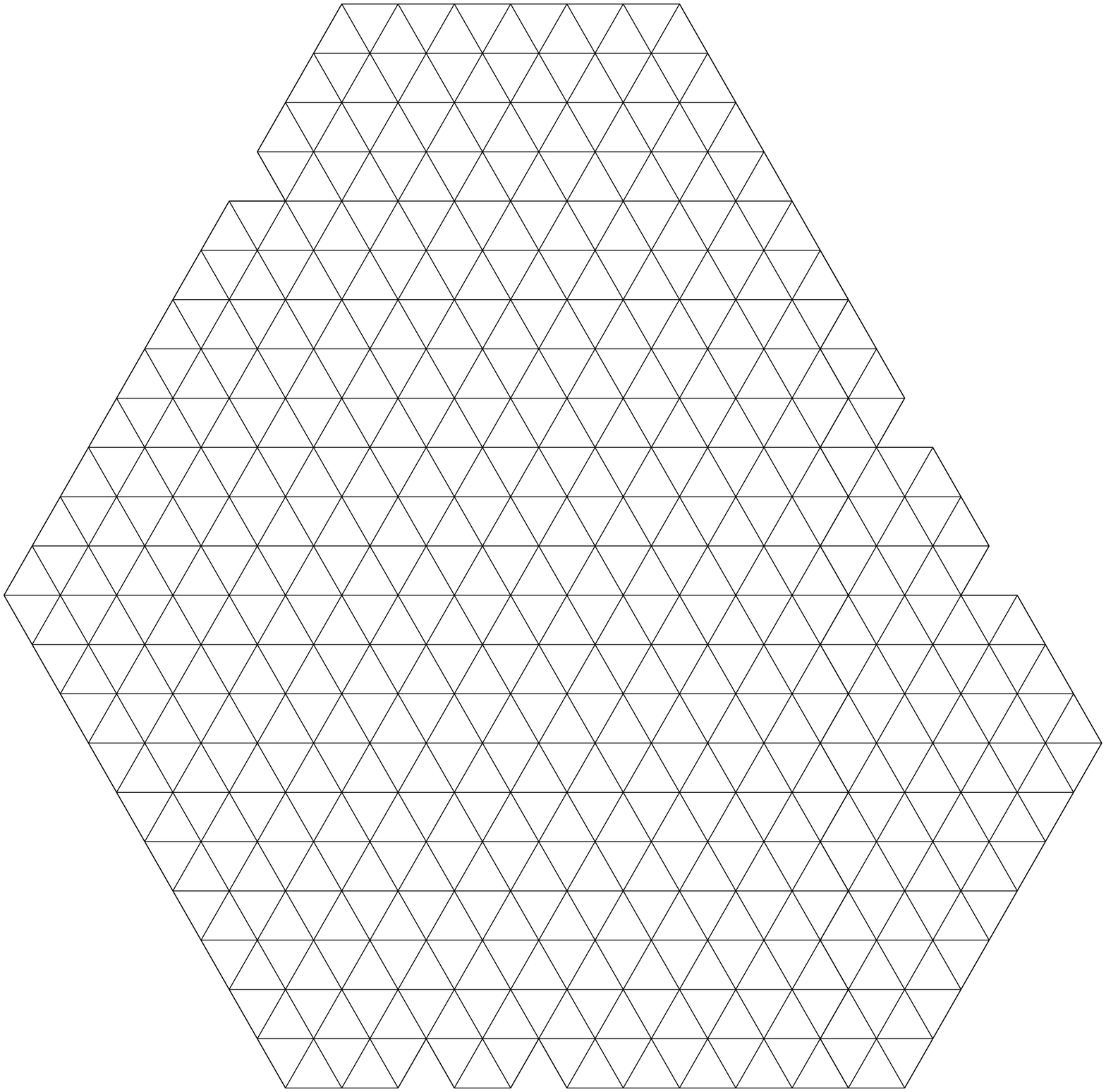}}
\hfill
{\includegraphics[width=0.45\textwidth]{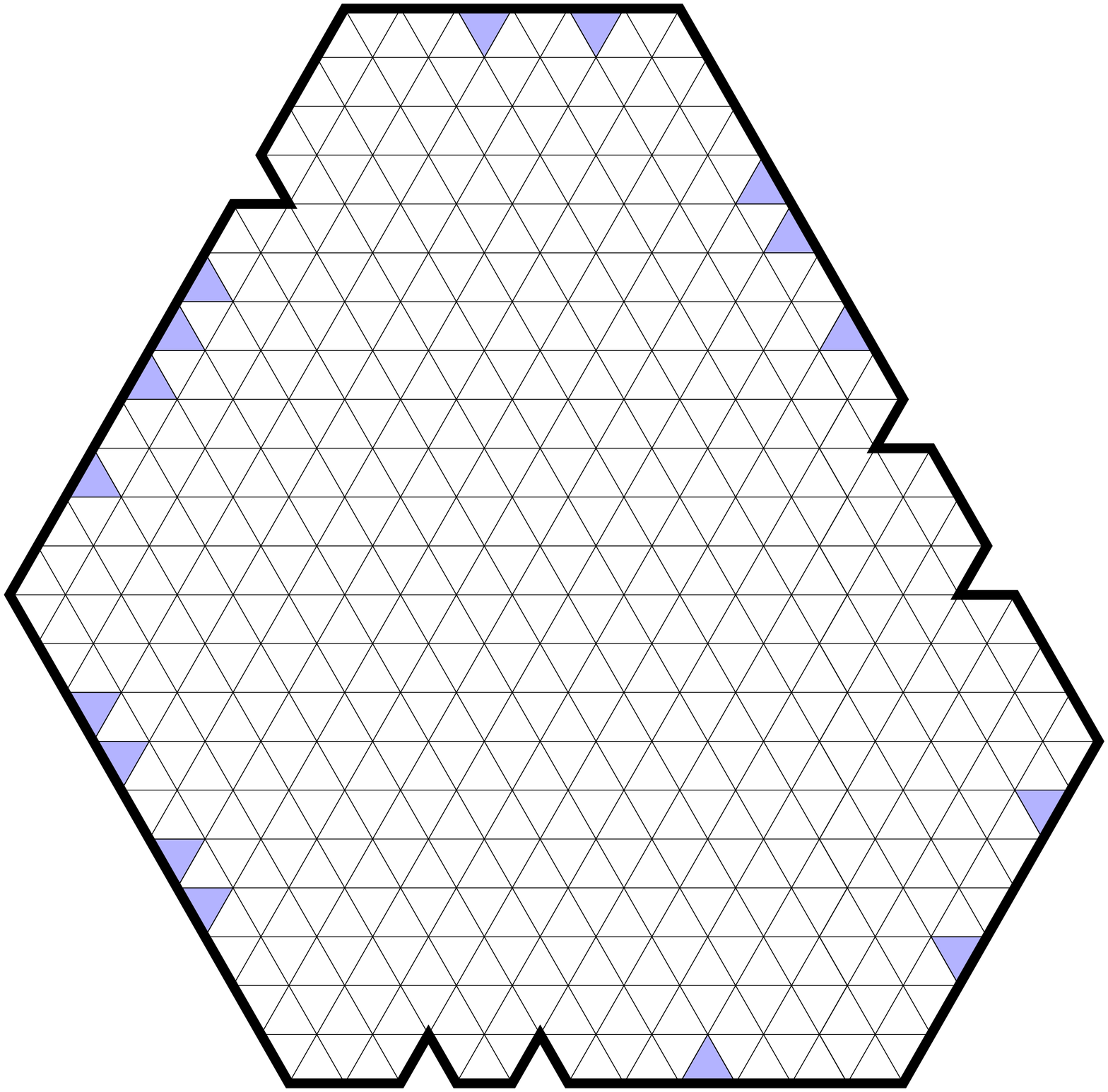}}
\hfill
}
\caption{\label{a_dent_bdry} A region whose boundary accounts for some of the $\al$-dents and a choice of unit triangles along its boundary.}
\end{figure}

{\it Proof of Theorem~\ref{main_thm_gen}.} Let $D$ be the region obtained from $H^k_{a,b,c}$ by removing $k$ of the unit triangles $\al_1,\dotsc,\al_{n+k}$ (this region is illustrated on the left in Figure~\ref{a_dent_bdry}). Apply Theorem~\ref{gk_thm} to the planar dual graph of $D$, with the removed unit triangles chosen to be the vertices corresponding to the $n$ $\al_i$'s inside $D$ and to $\be_1,\dotsc,\be_n$. Then the left hand side of equation (\ref{Pf_formula}) becomes precisely the number of tilings we need, and the right hand side of (\ref{Pf_formula}) becomes the Pfaffian of a $2n\times2n$ matrix whose entries are of the form $\M(D\setminus\{\al_i,\be_j\})$, where $\al_i$ is not one of the unit triangles that were removed from $H^k_{a,b,c}$ to obtain $D$. However, $D\setminus\{\al_i,\be_j\}$ is a dented hexagon with all dents confined to four of its sides (the dents of type $a$ can only occur along the northwestern, northeastern, and southern sides of the hexagon, and there is a single dent of type $\be$). Therefore Theorem~\ref{main_thm} applies, and 
it provides an expression for $\M(D\setminus\{\al_i,\be_j\})$ as the Pfaffian of a $(2k+2)\times(2k+2)$ matrix of the type described in the statement of Theorem~\ref{main_thm}. \epf

{\it Proof of Theorem~\ref{kis0_thm}.} Apply Theorem~\ref{gk_thm} to the planar dual graph of $H_{a,b,c}$, with the removed unit triangles chosen to correspond to $\al_1,\dotsc,\al_n,\be_1,\dotsc,\be_n$. Then the right hand side of (\ref{Pf_formula}) becomes precisely the expression on the right hand side of (\ref{kis0_expr}).  If $\de_i$ and $\de_j$ are of the same type, $H_{a,b,c}\setminus\{\de_i,\de_j\}$ does not have the same number of up-pointing and down-pointing unit triangles, and $\M(H_{a,b,c}\setminus\{\de_i,\de_j\})=0$. To complete the proof, note that $H_{a,b,c}\setminus\{\al_i,\be_j\}$ is either a hexagon with two dents on adjacent sides, or  a hexagon with two dents on opposite sides, and hence its number of tilings is given by Proposition~\ref{adjacent_prop} or Proposition~\ref{opposite_prop}, respectively. \epf


\section{Concluding remarks and an open problem}

In this paper we presented Pfaffian expressions for hexagons with arbitrary dents along the boundary. If the dents are confined to five sides of the hexagon, or if there is the same number of up-pointing and down-pointing dents, the entries in our Pfaffians have explicit forms, as either products of linear factors or single sums of products of linear factors. The expression for the general case is a nested Pfaffian.
It would be interesting to find a Pfaffian expression with entries given explicitly in the general case.


\end{document}